\newtheorem{theorem}{Theorem}
\newtheorem{lemma}{Lemma}
\theoremstyle{remark}
\newcommand{\bbQ}{\mathbb Q}
\title[\title{On a Problem of Pillai with {$k$}-Fibonacci Numbers and Powers of $2$}]{On a Problem of Pillai with $k$-Generalized Fibonacci Numbers and Powers of $2$}
\author[Mahadi Ddamulira, Carlos A. G\'omez and Florian Luca]{Mahadi Ddamulira, Carlos A. G\'omez and Florian Luca}
\subjclass[2010]{11D61,11B39,11D45, 11J86}
\keywords{Diophantine equations, Pillai's problem, Generalized Fibonacci sequence, Reduction method}
\address{Mahadi Ddamulira \newline
         \indent Institute of Analysis and Number Theory, Graz University of Technology \newline
        \indent Steyrergasse 30/II \newline
         \indent A-8010 Graz, Austria}
\email{mddamulira\char'100tugraz.at}
\address{Carlos A. G\'omez \newline
         \indent Departamento de Matem\'aticas, Universidad del Valle \newline
        \indent Calle 13 No 100--00 \newline
         \indent Cali, Colombia}
\email{carlos.a.gomez\char'100correounivalle.edu.co}
\address{Florian Luca \newline
         \indent School of Mathematics, University of the Witwatersrand \newline
        \indent Private Bag X3\newline
        \indent WITS 2050\newline
         \indent Johannesberg, South Africa}
\address{Max Planck Institute for Mathematics,\newline 
         \indent Vivatsgasse 7, 53111 Bonn, Germany}
\address{Department of Mathematics, Faculty of Sciences, University of Ostrava,\newline
           \indent 30 dubna 22, 701 03 Ostrava 1, Czech Republic}
\email{Florian.Luca\char'100wits.ac.za}
\begin{document}


\begin{abstract}
For an integer $ k\geq 2 $, let $ \{F^{(k)}_{n} \}_{n\geq 0}$ be the $ k$--generalized Fibonacci sequence which starts with $ 0, \ldots, 0, 1 $ ($ k $ terms) and each term afterwards is the sum of the $ k $ preceding terms. In this paper, we find all integers $c$ having at least two representations as a difference between a $k$--generalized Fibonacci number and a powers of 2 for any fixed $k \geqslant 4$. This paper extends previous work from \cite{Luca15}  for the case $k=2$ and \cite{Luca16} for the case $k=3$.
\end{abstract}

\maketitle

\section{Introduction}
A perfect power is a positive integer of the form $ a^{x} $ where $ a>1 $ and $ x\geq 2 $ are integers. Pillai wrote several papers on these numbers. In $ 1936 $ and again in $ 1945 $ (see \cite{Pillai:1936}, \cite{Pillai:1937}), he conjectured that for any given integer $ c\geq 1 $, the number of positive integer solutions $ (a,b,x,y) $, with $ x\geq 2 $ and $ y\geq 2 $, to the Diophantine equation
\begin{equation}\label{111}
a^{x}-b^{y} = c,
\end{equation}
is finite. This conjecture, which is still open for all $ c\neq 1 $, amounts to saying that the distance between two consecutive terms in the sequence of all perfect powers tends to infinity. The case $ c=1 $ is Catalan's conjecture which states that the only solution in positive integers to \eqref{111} for $a, b > 0$, $x, y > 1$  is $x = 2,~~ a = 3, ~~y = 3, ~~b = 2$. This conjecture was proved by  Mih{\u a}ilescu \cite{Mihailescu}.

Pillai's problem was continued in $ 1936 $ by Herschfeld (see \cite{Herschfeld:1935}, \cite{Herschfeld:1936}) who showed that if $ c $ is an integer with sufficiently large absolute value, then the equation \eqref{111}, in the special case $ (a,b) = (3,2) $, has at most one solution $ (x,y) $. For small $ |c| $ this is not the case. Pillai (see \cite{Pillai:1936}, \cite{Pillai:1937}) extended Herschfeld's result to the more general exponential Diophantine equation \eqref{111} with fixed integers $ a,b,c $ with gcd$(a,b)=1$ and $ a>b\geq 1 $. Specifically, Pillai showed that there exists a positive integer $ c_{0}(a,b) $ such that, for $ |c|> c_{0}(a,b) $, equation \eqref{111} has at most one integer solution $ (x,y) $.

Recently, Ddamulira, Luca and Rakotomalala \cite{Luca15} considered the Diophantine equation
\begin{equation}\label{eq:Luca1} 
F_n - 2^m = c,
\end{equation}
where $c$ is a fixed integer and $\{F_n\}_{n \geqslant 0}$ is the sequence of Fibonacci numbers given
by $F_0 = 0$, $F_1 = 1$ and $F_{n+2} = F_{n+1} + F_n$ for all $n \geqslant 0$.
This type of equation can be seen as a variation of Pillai's equation. Ddamulira et.al. proved
that the only integers $c$ having at least two representations of the form
$F_n - 2^m$ are contained in the set  $\mathcal C = \{ 0, -1, 1, -3, 5, -11, -30, 85 \}$. Moreover, they
computed for each $c\in\mathcal C$ all representations of the from \eqref{eq:Luca1}.

Bravo, Luca and Yaz\'an \cite{Luca16} considered the Diophantine equation
\begin{equation}\label{eq:Luca2} T_n - 2^m = c, \end{equation}
where $c$ is a fixed integer and $\{T_n\}_{n \geqslant 0}$ is the sequence of Tribonacci numbers given
by $T_0 = 0$, $T_1 = 1$, $ T_{2} = 1 $ and $T_{n+3} = T_{n+2} + T_{n+1}+T_{n}$ for all $n \geqslant 0$.
In their paper, Bravo et. al. proved that the only integers $c$ having at least two representations of the form
$T_n - 2^m$ are contained in the set  $\mathcal C = \{ 0, -1, -3, 5, -8 \}$. In fact, each $c\in\mathcal C$ has exactly two representations of the from \eqref{eq:Luca2}.

In the same spirit, Chim, Pink and Ziegler \cite{cpz16} considered the Diophantine equation
\begin{equation}\label{eq:Salz} F_n - T_m = c, \end{equation}
where $c$ is a fixed integer. They proved that the only integers $c$ having at least two representations of the form
$F_n - T_m$ are contained in the set  $$\mathcal C = \{ 0, 1, -1, -2, -3, 4, -5, 6, 8, -10, 11, -11, -22, -23, -41, -60, -271\} .$$ In particular, they
computed for each $c\in\mathcal C$ all representations of the from \eqref{eq:Salz}, showing that each $c\in \mathcal C$ has at most four representations.

The purpose of this paper is to generalize the previous results corresponding to \eqref{eq:Luca1} and \eqref{eq:Luca2}. Let $k \geqslant 2$ be an integer. We consider a generalization of Fibonacci sequence called the $k$--generalized Fibonacci sequence $\lbrace F_n^{(k)} \rbrace_{n\geqslant 2-k}$ defined as
\begin{eqnarray}
F_n^{(k)} &=& F_{n-1}^{(k)} + F_{n-2}^{(k)} + \cdots + F_{n-k}^{(k)},\label{fibb1}
\end{eqnarray}
with the initial conditions
\[F_{-(k-2)}^{(k)} = F_{-(k-3)}^{(k)} = \cdots = F_{0}^{(k)} = 0 ~~ \text {  and  }~~ F_{1}^{(k)} = 1.\]
We call $F_{n}^{(k)}$ the $n$th $k$--generalized Fibonacci number. Note that when $k=2$, it is the classical Fibonacci number ($n$th term, which is denoted by $F_{n}$ here for simplicity) and when $k=3$ it is the Tribonacci number.

The first direct observation is that the first $ k+1 $ nonzero terms in $F_{n}^{(k)}  $ are powers of $ 2 $, namely
\begin{eqnarray*}
F_{1}^{(k)}=1,~ F_{2}^{(k)}=1,~ F_{3}^{(k)}=2, ~F_{4}^{(k)}=4,\ldots, F_{k+1}^{(k)}=2^{k-1},
\end{eqnarray*}
while the next term in the above sequence is $ F_{k+2}^{(k)}=2^{k}-1 $.
Thus, we have that
\begin{eqnarray}\label{Fibbo111}
F_{n}^{(k)} = 2^{n-2} ~~~~~~\textrm{ holds~ for~ all  }  ~~~ 2\leq n\leq k+1.
\end{eqnarray}
We also observe that the recursion \eqref{fibb1} implies the three--term recursion
\begin{eqnarray}
F_{n}^{(k)}&=&2F_{n-1}^{(k)} - F_{n-k-1}^{(k)} ~~~~\text{ for all  } ~ n\geq 3,\label{fibb2}
\end{eqnarray}
which shows that the $ k $--Fibonacci sequence grows at a rate less than $ 2^{n-2} $. In fact, the inequality $ F_{n}^{(k)}<2^{n-2} $ holds for all $ n\geq k+2 $ (see \cite{BBL17}, Lemma $ 2 $).

In this paper, we find all integers $c$ admitting at least two representations of the form $ F_n^{(k)} - 2^m $ for some positive integers $k$, $n$ and $m$. This can be interpreted as solving the equation
\begin{align}
\label{Problem}
 F_n^{(k)} - 2^m = F_{n_1}^{(k)} - 2^{m_1}~~~~(=c)
\end{align}
with $(n, m) \neq (n_1, m_1)$. As we already mentioned, the cases $k=2$ and $ k=3 $ have been solved completely by Ddamulira, Luca and Rakotomalala \cite{Luca15} and Bravo, Luca and Yaz\'an \cite{Luca16}, respectively. So, we focus on the case  $k \geqslant 4$.\\

We prove the following theorem:

\begin{theorem}\label{Main}
Assume that $k\geq 4$. Then equation \eqref{Problem} with $ n>n_{1}\geq 2 $, $ m>m_{1}\geq 0 $ has the following families of solutions $(c,n,m,n_1,m_1)$.
\begin{itemize}
\item[(i)] In the range $2 \le n_1 < n \le k+1$, we have the following solution:
$$
(0,s,s-2,t,t-2)\qquad {\text{ for }}\quad 2\le t<s\le k+1.
$$
\item[(ii)] In the ranges $2 \le n_1 \le k+1$ and $k+2 \le n \le 2k+2$,  we have the following solutions:\vspace*{.3cm}
\begin{enumerate}
\item[$(a)$] when $n_1 = n-1$:
$$
\left(2^{k-1}-1, k+2, k-1, k+1, 0\right)
$$
\item[$(b)$] when $n_1 < n-1$: 
$$
\qquad\left(2^{\gamma}-2^{\rho}, k+2^a-2^b,k+2^a-2^b-2,\gamma+2,\rho\right),
$$
with $\gamma = b-3+2^a-2^b$ and $\rho = a-3+2^a-2^b$, where $a>b\ge 0$, $(a,b)\ne (1,0)$ and $\gamma+3\le k+2$.

\end{enumerate}
\item[(iii)] In the range $k+2\le n_1<n\le 2k+2$, we have the following solutions: if the integer $a$ is maximal such that  $2^a\le k+2$ satisfies $ a+2^{a}=k+1+2^{b} $ for some positive integer $ b $, then
$$
(-2^{a+2^a -3},k+2^a,k+2^a-2,k+2^{b},b+2^{b}-3).
$$
\item[(iv)] If $n = 2k+3$, and additionally $k=2^t-3$ for some integer $t\ge3$, then:
$$
( 1 - 2^{t+2^t-3}, 2^{t+1}-3, 2^{t+1}-5, 2, t+2^t-3 ).
$$

\end{itemize}
Equation \eqref{Problem} has no solutions with $n>2k+3$.
\end{theorem}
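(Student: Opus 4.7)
The plan is to partition the analysis by the positions of $n$ and $n_1$ relative to the threshold $k+1$, since the behaviour of $F_n^{(k)}$ changes at $n=k+2$ (from being a pure power of $2$ to being a small perturbation of one, thanks to \eqref{Fibbo111} and the recursion \eqref{fibb2}). Specifically, the four ranges are: $(A)$ $2\le n_1<n\le k+1$, $(B)$ $2\le n_1\le k+1<n\le 2k+2$, $(C)$ $k+2\le n_1<n\le 2k+2$, and $(D)$ $n\ge 2k+3$. Ranges $(A)$--$(C)$ (together with the isolated case $n=2k+3$) should produce families $(i)$--$(iv)$ by finite combinatorics on binary expansions, while $(D)$ (with $n\ge 2k+4$) should be shown to contain no solutions.

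In range $(A)$, both $F_n^{(k)}$ and $F_{n_1}^{(k)}$ are powers of $2$, so \eqref{Problem} collapses to $2^{n-2}-2^m=2^{n_1-2}-2^{m_1}$, and comparing $2$-adic valuations immediately forces $m=n-2$, $m_1=n_1-2$, giving family $(i)$. In ranges $(B)$ and $(C)$ I would exploit the exact expansion
$$F_{k+j}^{(k)} = 2^{k+j-2} - \sum_{\ell=0}^{?}\binom{\,\cdot\,}{\,\cdot\,} 2^{k+j-?},$$
valid for $j$ up to $k+2$ or so (a standard consequence of \eqref{fibb2} iterated), which expresses $F_n^{(k)}$ as a short signed sum of powers of $2$. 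Plugging this into \eqref{Problem} turns the equation into an identity between two short $\{\pm 1\}$-combinations of powers of $2$; a careful inspection of the dominant and sub-dominant terms (matching the two largest powers on each side) produces precisely the constraints $n=k+2^a-2^b$, $\gamma=b-3+2^a-2^b$, $\rho=a-3+2^a-2^b$, and the analogous parameter relation $a+2^a=k+1+2^b$ in $(iii)$. The case $n=2k+3$ must be treated separately because the expansion above acquires an extra term precisely when $k=2^t-3$, producing family $(iv)$.

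For range $(D)$ I plan to use the standard Binet-like formula $F_n^{(k)}=\sum_{i=1}^k g_k(\alpha_i)\alpha_i^{n-1}$, where $\alpha=\alpha_1$ is the dominant root with $2(1-2^{-k})<\alpha<2$, and the remaining roots lie inside the unit disc, so that $F_n^{(k)}=g_k(\alpha)\alpha^{n-1}+\varepsilon_n$ with $|\varepsilon_n|<1/2$. From \eqref{Problem} I extract two linear forms in logarithms,
$$\Lambda_1 = m\log 2-(n-1)\log\alpha-\log g_k(\alpha),\qquad \Lambda_2 = (m-m_1)\log 2-(n-1)\log\alpha-\log\!\bigl(g_k(\alpha)(1-\alpha^{n_1-n})\bigr),$$
and apply Matveev's theorem twice. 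A first application bounds $m-m_1$ polynomially in $\log n$; a second bounds $n$ by a quantity of the form $C(k)\log^2 n$ with $C(k)$ explicit and polynomial in $k$. This produces an absolute upper bound $n<N_0(k)$.

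The main obstacle is turning this Baker bound into a contradiction with $n\ge 2k+4$. The delicate point is that when $k$ is large, $\alpha$ is exponentially close to $2$, so the first linear form is essentially $m\log 2-(n-1)\log\alpha$, which is itself very small; the usual trick is to use $|\log\alpha-\log 2|<2^{-k}/\log 2$ to replace $\log\alpha$ by $\log 2$ at a controlled cost, thereby showing $m=n-2+O(n\cdot 2^{-k})$, which for $n\le N_0(k)$ forces $m=n-2$ exactly once $k$ is moderately large. After this alignment the equation becomes a near-identity of the shape $\varepsilon_n-\varepsilon_{n_1}=2^{m_1}(2^{m-m_1}-1)+(\text{tiny})$, which one can exclude by size comparison. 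For the remaining finitely many values of $k$ (say $k\le 400$) I would run a Baker--Davenport / LLL reduction on $\Lambda_2$ to squeeze $n$ down to an explicitly checkable range, and verify by computer that no solution beyond the list $(i)$--$(iv)$ arises.
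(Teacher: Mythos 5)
Your overall architecture coincides with the paper's: ranges $(A)$--$(C)$ are handled there via the Cooper--Howard identity (Lemma~\ref{Howard}), which for $k+2\le n\le 2k+2$ reduces to $F_n^{(k)}=2^{n-2}-(n-k)\cdot 2^{n-k-3}$ --- exactly the ``short signed sum of powers of $2$'' you invoke --- and the large-$n$ range is treated by Matveev's theorem combined with the closeness of $\alpha$ to $2$, followed by a Dujella--Peth\H o reduction for the finitely many remaining $k$. Two bookkeeping points: in range $(C)$ the subcase $n_1<n-1$ is not dispatched by ``matching the two largest powers''; it needs a $2$-adic valuation argument on $(n-k)2^{n-k-3}-(n_1-k)2^{n_1-k-3}=2^{m_1}-2^{n_1-2}$ that only yields $k\le 4$, so it must be swept up by the later computation. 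And to bound $n$ you need three applications of Matveev, not two: one for $\min\{n-n_1,m-m_1\}$, one (in two symmetric variants) for the other difference, and a final one with $f_k(\alpha)(\alpha^{n-n_1}-1)/(2^{m-m_1}-1)$, whose height is controlled only after both differences are bounded. Your $\Lambda_2$ alone cannot bound $n$, since $h\bigl(g_k(\alpha)(1-\alpha^{n_1-n})\bigr)$ grows linearly in $n-n_1$.

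The genuine gap is in range $(D)$. After aligning $m=n-2$, the residual equation is \emph{not} of the shape $\varepsilon_n-\varepsilon_{n_1}=2^{m_1}(2^{m-m_1}-1)+(\text{tiny})$: the quantity $2^{n-2}-F_n^{(k)}$ is not a Binet-type error of size $<1/2$ but equals $(n-k)\cdot 2^{n-k-3}$ up to lower-order terms, which is enormous for $n\ge 2k+3$. The equation $F_n^{(k)}-2^{n-2}=F_{n_1}^{(k)}-2^{m_1}$ therefore forces a nontrivial match between $2^{m_1}$ (and possibly $2^{n_1-2}$) and $(n-k)\cdot 2^{n-k-3}$, i.e.\ $n-k$ must be a power of $2$ or a difference of two powers of $2$; excluding the surviving configurations requires expanding to the next term $2^{n-2k-5}(n-2k+1)(n-2k-2)$ and splitting on whether $n_1<n-k-1$ or $n_1\ge n-k-1$. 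This second-order step is exactly where family $(iv)$ at $n=2k+3$ emerges and where $n\ge 2k+4$ is finally killed; a size comparison at the level of the Binet errors proves nothing here, and taken literally would also (falsely) exclude family $(iv)$. Finally, your alignment only gives $m\in\{n-3,\,n-2\}$: the subcase $(m,n_1)=(n-3,n-1)$ survives the first-order comparison and must be ruled out separately by the same second-order expansion.
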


\section{Preliminary Results}

Here, we recall some of the facts and properties of the $ k- $generalized Fibonacci sequence which will be used later in this paper.
It is known that the characteristic polynomial of the $k$--generalized Fibonacci numbers $F^{(k)}:=\lbrace F_n^{(k)} \rbrace_{n\geq 0}$, namely
\[ \Psi_k(x) := x^k - x^{k-1} - \cdots - x - 1, \]
is irreducible over $\bbQ[x]$ and has just one root outside the unit circle. Let $\alpha := \alpha(k)$ denote that single root, which is located between $2\left(1-2^{-k} \right)$ and $2$ (see \cite{Dresden2014}). To simplify notation, in our application we shall omit the dependence on $k$ of $\alpha$. We shall use $\alpha^{(1)}, \dotso, \alpha^{(k)}$ for all roots of $\Psi_k(x)$ with the convention that $\alpha^{(1)} := \alpha$.

We now consider for an integer $ k\geq 2 $, the function
\begin{eqnarray}\label{fun12}
f_{k}(z) = \dfrac{z-1}{2+(k+1)(z-2)} \qquad \text{for}~~~ z \in \mathbb{C}.
\end{eqnarray}
With this notation, Dresden and Du presented in  \cite{Dresden2014} the following ``Binet--like" formula for the terms of $F^{(k)}$:
\begin{eqnarray} \label{Binet}
F_n^{(k)} = \sum_{i=1}^{k} f_{k}(\alpha^{(i)}) {\alpha^{(i)}}^{n-1}.
\end{eqnarray}
It was proved in \cite{Dresden2014} that the contribution of the roots which are inside the unit circle to the formula (\ref{Binet}) is very small, namely that the approximation
\begin{equation} \label{approxgap}
\left| F_n^{(k)} - f_{k}(\alpha)\alpha^{n-1} \right| < \dfrac{1 }{2} \quad \mbox{holds~ for~ all~ } n \geqslant 2 - k.
\end{equation}
When $ k=2 $, one can easily prove by induction that
\begin{eqnarray}\label{Fib11}
\alpha^{n-2} \leq F_{n} \leq \alpha^{n-1} ~~\text{ ~for ~all ~ } n\geq 1.
\end{eqnarray}
It was proved by Bravo and Luca in \cite{BBL17} that
\begin{eqnarray}\label{Fib12}
\alpha^{n-2} \leq F_{n}^{(k)} \leq \alpha^{n-1} \text{  holds for all  } n\geq 1 \text{ and } k\geq 2,
\end{eqnarray}
which shows that \eqref{Fib11} holds for the $ k $--generalized Fibonacci numbers as well. The observations made from the expressions \eqref{Binet} to \eqref{Fib12} enable us to call $ \alpha $ the \textit{dominant root } of $ F^{(k)} $.

In order to prove our main result Theorem \ref{Main}, we need to use several times a Baker type lower bound for a nonzero linear form in logarithms of algebraic numbers and such a bound, which plays an important role in this paper, was given by Matveev \cite{MatveevII}. There are other explicit lower bounds for linear forms in logarithms of algebraic numbers in the literature, like that by Baker and W{\"u}stholz in  \cite{bawu07}, for example. We begin by recalling some basic notions from algebraic number theory.

Let $ \eta $ be an algebraic number of degree $ d $ with minimal primitive polynomial over the integers
$$ a_{0}x^{d}+ a_{1}x^{d-1}+\cdots+a_{d} = a_{0}\prod_{i=1}^{d}(x-\eta^{(i)}),$$
where the leading coefficient $ a_{0} $ is positive and the $ \eta^{(i)} $'s are the conjugates of $ \eta $. Then the \textit{logarithmic height} of $ \eta $ is given by
$$ h(\eta) := \dfrac{1}{d}\left( \log a_{0} + \sum_{i=1}^{d}\log\left(\max\{|\eta^{(i)}|, 1\}\right)\right).$$

In particular, if $ \eta = p/q $ is a rational number with $ \gcd (p,q) = 1 $ and $ q>0 $, then $ h(\eta) = \log\max\{|p|, q\} $. The following are some of the properties of the logarithmic height function $ h(\cdot) $, which will be used in the next sections of this paper without reference:
\begin{eqnarray}
h(\eta\pm \gamma) &\leq& h(\eta) +h(\gamma) +\log 2,\nonumber\\
h(\eta\gamma^{\pm 1})&\leq & h(\eta) + h(\gamma),\\
h(\eta^{s}) &=& |s|h(\eta) ~~~~~~ (s\in\mathbb{Z}). \nonumber
\end{eqnarray}

With the previous notation, Matveev \cite{MatveevII} proved the following theorem, which is our main tool in this paper.
\begin{theorem}\label{Matveev11} Let $ \gamma_{1}, \ldots, \gamma_{t} $ be positive real algebraic numbers in a real algebraic number field $ \mathbb{K} $ of degree $ D $, $ b_{1}, \ldots, b_{t} $ be nonzero integers, and let
$$\Lambda : = \gamma_{1}^{b_{1}}\cdots\gamma_{t}^{b_{t}} - 1,$$
be nonzero. Then
$$\log |\Lambda| > -1.4\times 30^{t+3}\times t^{4.5}\times D^{2}(1+\log D)(1+\log B)A_{1}\cdots A_{t},$$
where
$$B \geq \max\{|b_{1}|, \ldots, |b_{t}|\},$$
and
$$A_{i} \geq \max\{Dh(\gamma_{i}), |\log\gamma_{i}|, 0.16\}, \text{  for all  } i=1, \ldots, t.$$
\end{theorem}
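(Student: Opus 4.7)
The strategy is to split the analysis according to where the larger index $n$ falls relative to $k+1$, since \eqref{Fibbo111} makes $F_j^{(k)}$ a pure power of $2$ precisely for $2\leq j\leq k+1$. I would handle three regimes: (A) both $n, n_1 \leq k+1$; (B) $n_1 \leq k+1 < n$; and (C) both $n_1, n \geq k+2$. In each regime, I first derive an upper bound for $n$ (absolute or in terms of $k$) and then enumerate the finitely many small cases to match the families in the theorem.

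For regime (A), \eqref{Problem} collapses to $2^{n-2} - 2^m = 2^{n_1-2} - 2^{m_1}$; comparing $2$-adic valuations of both sides forces $\{m,m_1\} = \{n-2,n_1-2\}$ and hence $c=0$, which yields family (i). For regimes (B) and (C), I would substitute the Binet-like formula \eqref{Binet}. The approximation $F_n^{(k)} = f_k(\alpha)\alpha^{n-1} + O(1)$ guaranteed by \eqref{approxgap} transforms \eqref{Problem} into
\[
\left| f_k(\alpha)\alpha^{n-1}\bigl(1 - \alpha^{n_1-n}\bigr) - 2^{m_1}\bigl(2^{m-m_1} - 1\bigr)\right| < 1,
\]
together with analogous approximations of each side individually that relate $2^m$ to $f_k(\alpha)\alpha^{n-1}$.

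The core is two successive applications of Matveev's bound (Theorem \ref{Matveev11}) to nonzero linear forms in the three logarithms $\log\alpha$, $\log 2$, $\log f_k(\alpha)$. A first pass, on the simpler form $f_k(\alpha)\alpha^{n-1}2^{-m} - 1$, controls $n - m$ to within a factor logarithmic in $n$. A second pass, on the form extracted from the full equation \eqref{Problem}, yields an upper bound of the shape $n < C k^{O(1)} \log^{O(1)} k$. A Baker--Davenport style reduction, using that $\alpha \in (2(1-2^{-k}), 2)$ makes $\log 2/\log\alpha$ an excellent rational approximant with denominator controlled by $k$, then brings the bound down to $n \leq 2k+3$. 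The \emph{main obstacle} is carrying out this reduction uniformly in $k \geq 4$: the standard Baker--Davenport lemma uses concrete continued-fraction data for a fixed irrational, but here $\alpha$ depends on $k$. I would replace this by an algebraic argument that exploits the three-term recursion \eqref{fibb2} together with the estimate $F_n^{(k)} < 2^{n-2}$ from \cite{BBL17} to produce a contradiction whenever $n > 2k+3$.

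Once $n \leq 2k+3$ is established, the remaining work is finite and combinatorial. In regime (B) with $k+2 \leq n \leq 2k+2$, I would expand $F_n^{(k)}$ by iterating \eqref{fibb2} into a near-binary form and match coefficients against $2^{n_1-2} + 2^m - 2^{m_1}$; this produces the isolated case (ii)(a) when $n_1 = n-1$ and the parametric family (ii)(b) with $\gamma$, $\rho$ as in the statement. The analogous matching in regime (C) forces $a + 2^a = k+1+2^b$ and yields family (iii). Finally, the boundary case $n = 2k+3$ yields a solution only when $k+2$ is a power of $2$, i.e.\ $k = 2^t - 3$, giving the sporadic family (iv), and the nonexistence of solutions for $n > 2k+3$ follows from the uniform bound above.
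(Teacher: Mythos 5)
Your proposal does not prove the statement you were asked to prove. The statement is Theorem \ref{Matveev11}, Matveev's lower bound for a nonzero linear form $\Lambda = \gamma_1^{b_1}\cdots\gamma_t^{b_t}-1$ in terms of the heights $A_i$, the degree $D$ and the exponent bound $B$. This is a deep, general result in transcendental number theory; in the paper it is simply quoted from \cite{MatveevII} with no proof given, since its proof (via interpolation determinants / auxiliary polynomial constructions in the theory of linear forms in logarithms) is far outside the scope of a paper on Pillai-type equations. What you have written instead is a strategy for the paper's \emph{main} result, Theorem \ref{Main}: you split on the ranges of $n$ and $n_1$ relative to $k+1$, invoke the Binet-like formula, and then \emph{apply} Matveev's theorem as a black box. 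In other words, your argument uses the statement you were supposed to prove as one of its key tools, so it cannot serve as a proof of that statement.

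As a secondary point, even read as a sketch of Theorem \ref{Main} your outline diverges from the paper in one substantive way: the paper does carry out a genuine Baker--Davenport reduction uniformly over $k\in[4,790]$ (computing continued-fraction data of $\log\alpha/\log 2$ for each $k$ by machine), and for $k>790$ it does not use reduction at all but rather the elementary expansion of Lemma \ref{Gomez} under the hypothesis $n^3<2^{k-5}$ to rule out $n>2k+3$. Your proposed replacement of the reduction step by ``an algebraic argument exploiting the three-term recursion'' is not fleshed out and, as stated, would not suffice: the recursion \eqref{fibb2} alone does not control the linear form $ (n-1)\log\alpha-m\log 2+\log f_k(\alpha)$ well enough to push the bound from $n\ll k^{11}\log^7 k$ down to $n\le 2k+3$ without either the explicit continued-fraction computation or the $2$-adic expansion argument of Section 4.
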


During the course of our calculations, we get some upper bounds on our variables which are too large, thus we need to reduce them. To do so, we use some results from the theory of continued fractions. Specifically, for a nonhomogeneous linear forms in two integer variables, we use a slight variation of a result due to Dujella and Peth\H o \cite{dujella98}, which  itself is a generalization of a result of Baker and Davenport \cite{BD69}.

For a real number $ X $, we write  $ ||X||:= \min\{|X-n|: n\in\mathbb{Z}\} $ for the distance from $ X $ to the nearest integer.
\begin{lemma}\label{Dujjella}
Let $ M $ be a positive integer, $ p/q $ be a convergent of the continued fraction of the irrational number $ \tau $ such that $ q>6M $, and  $ A, B, \mu $ be some real numbers with $ A>0 $ and $ B>1 $. Let further $ \epsilon = ||\mu q||-M||\tau q|| $. If $ \epsilon > 0 $, then there is no solution to the inequality
$$0<|u\tau-v+\mu|<AB^{-w},$$
in positive integers $ u $, $ v $ and $ w $ with
$$ u\leq M \text{  and  } w\geq \dfrac{\log(Aq/\epsilon)}{\log B}.$$
\end{lemma}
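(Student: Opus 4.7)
The plan is to argue by contradiction. Assume there exist positive integers $u$, $v$, $w$ with $u\le M$ and $w\ge \log(Aq/\epsilon)/\log B$ that satisfy $0<|u\tau-v+\mu|<AB^{-w}$; I will show that these inputs force the strict inequality $\epsilon<qAB^{-w}$, which directly contradicts the lower bound on $w$ (that bound is equivalent to $qAB^{-w}\le\epsilon$, using $B>1$, $A>0$, $\epsilon>0$).

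The first move is to multiply the assumed inequality by $q$, turning it into
$$0<|uq\tau-vq+q\mu|<qAB^{-w},$$
so that everything is on the scale at which $\|\cdot\|$ naturally lives. Since $p/q$ is a convergent of the irrational $\tau$, the classical best-approximation property gives $|q\tau-p|<1/q$; together with $q>6M\ge 6$ this forces $|q\tau-p|<1/2$, so $p$ is the nearest integer to $q\tau$ and hence $|q\tau-p|=\|\tau q\|$. Writing $q\tau=p+(q\tau-p)$ and introducing the integer $k:=up-vq$, the quantity inside the absolute value decomposes as
$$uq\tau-vq+q\mu \;=\; (k+q\mu)\,+\,u(q\tau-p).$$

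The key observation is that $k\in\bbZ$, so $|k+q\mu|\ge \|\mu q\|$ by definition of the nearest-integer distance. Combining this with the triangle inequality and $u\le M$ yields
$$\|\mu q\| \;\le\; |k+q\mu| \;\le\; |uq\tau-vq+q\mu|\,+\,u\,|q\tau-p| \;<\; qAB^{-w}\,+\,M\|\tau q\|,$$
that is, $\epsilon=\|\mu q\|-M\|\tau q\|<qAB^{-w}$, which is the required contradiction. The only delicate point in this plan is the identification $|q\tau-p|=\|\tau q\|$, and even that is routine given the hypothesis $q>6M$; there is no substantive obstacle here, since the lemma amounts to a bookkeeping application of the triangle inequality once the multiplication-by-$q$ trick is made and the integrality of $k=up-vq$ is exploited.
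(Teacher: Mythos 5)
Your argument is correct: multiplying by $q$, splitting off the integer $up-vq$, using $\|\mu q\|\le|up-vq+q\mu|$ together with $u|q\tau-p|\le M\|\tau q\|$ (valid since the convergent satisfies $|q\tau-p|=\|\tau q\|$), and comparing $\epsilon<qAB^{-w}$ with $qAB^{-w}\le\epsilon$ from the hypothesis on $w$ yields the desired contradiction. The paper itself gives no proof of this lemma --- it is quoted as a variant of a result of Dujella and Peth\H o --- and your proof is exactly the standard argument for that reduction lemma, so there is nothing to compare beyond noting that it checks out.
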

Before we conclude this section, we present some useful lemmas that will be used in the next sections on this paper. The following lemma was proved by Bravo and Luca in \cite{BBL17}.
\begin{lemma}\label{fala5}
For $ k\geq 2 $, let $ \alpha $ be the dominant root of $ F^{(k)} $, and consider the function $ f_{k}(z) $ defined in \eqref{fun12}. Then:
\begin{itemize}
\item[(i)]Inequalities
$$\dfrac{1}{2}< f_{k}(\alpha)< \dfrac{3}{4}~~ \text{  and  } ~~|f_{k}(\alpha^{(i)})|<1, ~ 2\leq i\leq k$$
hold. So, the number $ f_{k}(\alpha) $ is not an algebraic integer.
\item[(ii)]The logarithmic height of $f_k(\alpha)$ satisfies $h(f_{k}(\alpha))< 3\log k$.
\end{itemize}
\end{lemma}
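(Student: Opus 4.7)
The plan is to establish both parts by elementary analysis of the rational function $f_k$ at the roots of $\Psi_k$, combined with the standard height formula. The key preliminary move is to rewrite $f_k(z) = (z-1)/((k+1)z-2k)$, under which all subsequent inequalities become transparent.

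For part (i), I would substitute $\alpha = 2-\epsilon$ with $0 < \epsilon < 2^{1-k}$ (from $2(1-2^{-k}) < \alpha < 2$) to obtain $f_k(\alpha) = (1-\epsilon)/(2-(k+1)\epsilon)$. Cross-multiplication converts $f_k(\alpha) > 1/2$ into $(k-1)\epsilon > 0$ and $f_k(\alpha) < 3/4$ into $(3k-1)\epsilon < 2$; the former is immediate, and the latter follows from $\epsilon < 2^{1-k}$ combined with $3k-1 \le 2^k$ (valid for $k \ge 3$), with the case $k=2$ dispatched by the explicit calculation $f_2(\alpha) = (5+\sqrt{5})/10 < 3/4$. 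For the inner conjugate roots $|\alpha^{(i)}| < 1$, the reverse triangle inequality gives $|(k+1)\alpha^{(i)} - 2k| > k-1$, hence $|f_k(\alpha^{(i)})| < 2/(k-1) \le 1$ once $k \ge 3$, and again $k=2$ is handled by direct computation of the lone conjugate. The ``not an algebraic integer'' conclusion then follows from the norm identity $N_{\bbQ(\alpha)/\bbQ}(f_k(\alpha)) = \prod_{i=1}^{k} f_k(\alpha^{(i)})$: its absolute value is strictly less than $(3/4)\cdot 1^{k-1} < 1$, yet $f_k(\alpha) \ne 0$ since $\alpha \ne 1$, so the norm cannot be a nonzero rational integer.

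For part (ii), I would first observe that part (i) forces every Galois conjugate of $\beta := f_k(\alpha)$ to lie strictly inside the unit disk, so the standard height formula collapses to $h(\beta) = (1/k)\log a_0$, where $a_0$ is the positive leading coefficient of the primitive integer minimal polynomial of $\beta$. To control $a_0$, I would invert the Möbius relation to $\alpha = (2k\beta-1)/((k+1)\beta-1)$, substitute into $\Psi_k(\alpha) = 0$, and clear denominators to produce
\[
Q(x) \;:=\; \bigl((k+1)x-1\bigr)^k\, \Psi_k\!\Bigl(\frac{2kx-1}{(k+1)x-1}\Bigr)\in\bbZ[x],
\]
a degree-$k$ polynomial vanishing at $\beta$. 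Since $\beta$ generates $\bbQ(\alpha)$ (by the Möbius inversion above), $\beta$ has degree $k$, so $Q$ is an integer scalar multiple of the primitive minimal polynomial of $\beta$, giving $a_0 \mid \mathrm{lc}(Q)$. A direct computation identifies $\mathrm{lc}(Q) = (\pm 1)\, N_{\bbQ(\alpha)/\bbQ}((k+1)\alpha-2k)$, and the factorwise triangle-inequality bounds $|(k+1)\alpha-2k| < 2$ (the dominant root, via the $\alpha = 2-\epsilon$ substitution) and $|(k+1)\alpha^{(i)}-2k| < 3k+1$ (inner roots, via $|\alpha^{(i)}| < 1$) yield $a_0 \le |\mathrm{lc}(Q)| < 2(3k+1)^{k-1}$. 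Hence
\[
h(\beta) \;\le\; \frac{1}{k}\log\bigl(2(3k+1)^{k-1}\bigr) \;<\; \log(3k+1) \;\le\; 3\log k,
\]
using $3k+1 \le k^3$ for all $k \ge 2$ in the last step.

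The main technical subtlety is the calibration of part (i)'s upper bound $f_k(\alpha) < 3/4$ for small $k$: the generic inequality $\alpha > 2(1-2^{-k})$ is not quite sharp enough at $k=2$ to force $(3k-1)\epsilon < 2$ directly, so an explicit golden-ratio computation is required there. Everything else reduces to routine triangle-inequality estimates once $f_k$ has been rewritten in the standard form $(z-1)/((k+1)z-2k)$.
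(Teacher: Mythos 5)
The paper does not prove this lemma itself but cites it from Bravo and Luca \cite{BBL17}; your argument is correct and is essentially the standard proof from that reference: all conjugates of $f_k(\alpha)$ lie in the open unit disk, so the height reduces to $\frac{1}{k}\log a_0$, and $a_0$ is bounded via the leading coefficient of $\bigl((k+1)x-1\bigr)^k\Psi_k\bigl(\frac{2kx-1}{(k+1)x-1}\bigr)$, i.e.\ the norm of $(k+1)\alpha-2k$. All the numerical calibrations you flag (the $k=2$ case of $f_k(\alpha)<3/4$, the inner-root bound $2/(k-1)\le 1$ needing $k\ge 3$, and $3k+1\le k^3$) check out.
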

Next, we present a useful lemma which is a result due to Cooper and Howard \cite{Howard:2011}.
\begin{lemma}\label{Howard}
For $ k\geq 2 $ and $ n\geq k+2 $,
\begin{eqnarray*}
F_{n}^{(k)}&=&2^{n-2}+\sum_{j=1}^{\left\lfloor \frac{n+k}{k+1}\right\rfloor -1}C_{n,j}2^{n-(k+1)j-2},
\end{eqnarray*}
where
\begin{eqnarray*}
C_{n,j}&=&(-1)^{j}\left[\binom{n-jk}{j}-\binom{n-jk-2}{j-2}\right].
\end{eqnarray*}
\end{lemma}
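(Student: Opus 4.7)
The strategy splits \eqref{Problem} into two regimes according to the size of $n$ relative to $k$. In the \emph{small regime} $n\le 2k+3$, the Cooper--Howard expansion (Lemma \ref{Howard}) turns $F_n^{(k)}$ into an explicit sum of at most three signed powers of $2$, so the equation becomes a dyadic identity that can be solved by matching $2$-adic valuations. In the \emph{large regime} $n>2k+3$, Cooper--Howard has too many terms to be useful; there I would invoke Matveev's Theorem \ref{Matveev11} followed by the Dujella--Peth\H o reduction (Lemma \ref{Dujjella}) to rule out solutions entirely.

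For the small regime I would case-split exactly as in the statement. When $2\le n_1<n\le k+1$, identity \eqref{Fibbo111} gives $F_n^{(k)}=2^{n-2}$ and $F_{n_1}^{(k)}=2^{n_1-2}$, and \eqref{Problem} reduces to $2^{n-2}-2^m=2^{n_1-2}-2^{m_1}$; an elementary binary argument forces $c=0$, $m=n-2$, $m_1=n_1-2$, which is case (i). When $n_1\le k+1<k+2\le n\le 2k+2$, Lemma \ref{Howard} contributes only the $j=1$ correction, giving $F_n^{(k)}=2^{n-2}-(n-k)2^{n-k-3}$, and \eqref{Problem} becomes
\[ 2^{n-2}-2^m-(n-k)2^{n-k-3}=2^{n_1-2}-2^{m_1}. \]
Matching the leading dyadic blocks on the two sides reduces the task to writing $n-k$ as a difference $2^a-2^b$, yielding case (ii)(a) when $n_1=n-1$ and case (ii)(b) otherwise. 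The subcase $k+2\le n_1<n\le 2k+2$ is treated the same way using Lemma \ref{Howard} on both $F_n^{(k)}$ and $F_{n_1}^{(k)}$, giving case (iii) once the $C_{\cdot,1}$ coefficients on the two sides are forced to match. Finally, $n=2k+3$ is the first exponent where a $j=2$ Cooper--Howard term appears; a direct computation shows that this extra term combines with the small-$n_1$ contribution exactly when $k+3$ is a power of $2$, which is case (iv).

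The principal obstacle is ruling out $n>2k+3$. Following the transcendence approach of \cite{Luca15, Luca16}, I would rewrite \eqref{Problem} as
\[ f_k(\alpha)\alpha^{n-1}-2^m = F_{n_1}^{(k)}-2^{m_1}+\bigl(F_n^{(k)}-f_k(\alpha)\alpha^{n-1}\bigr), \]
use \eqref{approxgap} and \eqref{Fib12} to bound the right-hand side by $O(\alpha^{n_1})$, and divide by $2^m$ to obtain a first linear form $\Lambda_1:=f_k(\alpha)\alpha^{n-1}2^{-m}-1$ of small absolute value. Matveev's Theorem \ref{Matveev11} applied to $\Lambda_1$ with the three algebraic numbers $\alpha,\,2,\,f_k(\alpha)$ (heights estimated via Lemma \ref{fala5}, which yields $h(f_k(\alpha))<3\log k$) gives an inequality that bounds $n-n_1$ polynomially in $k$ and $\log n$. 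A second Matveev application to the refined linear form obtained from $F_n^{(k)}-F_{n_1}^{(k)}\approx f_k(\alpha)\alpha^{n-1}(1-\alpha^{n_1-n})$ and $2^m-2^{m_1}=2^{m_1}(2^{m-m_1}-1)$ similarly bounds $m-m_1$, and a third iteration bounds $n$ absolutely in terms of $k$. Combined with the trivial estimate $m=(n-1)\log\alpha/\log 2+O(1)$, this produces an absolute upper bound on both $k$ and $n$. For the remaining finitely many $(k,n)$, Lemma \ref{Dujjella} applied with $\tau=\log\alpha/\log 2$ and $\mu=\log f_k(\alpha)/\log 2$ reduces the bounds to the range $n\le 2k+3$, contradicting the standing assumption. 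The genuine technical difficulty lies in keeping the Matveev constants sharp enough, and uniform enough in $k$, that the final reduction is computationally tractable for all $k\ge 4$ simultaneously rather than one value of $k$ at a time.
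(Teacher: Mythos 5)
Your proposal does not address the statement at all. The statement to be proved is Lemma \ref{Howard}, the Cooper--Howard expansion
$$
F_{n}^{(k)}=2^{n-2}+\sum_{j=1}^{\left\lfloor \frac{n+k}{k+1}\right\rfloor -1}(-1)^{j}\left[\binom{n-jk}{j}-\binom{n-jk-2}{j-2}\right]2^{n-(k+1)j-2},
$$
which is a combinatorial identity about $k$--generalized Fibonacci numbers. What you have written instead is an outline of the proof of the main theorem (Theorem \ref{Main}): the case split on the size of $n$ relative to $k$, the dyadic matching in the small regime, and the Matveev/Dujella--Peth\H o machinery in the large regime. That outline \emph{uses} Lemma \ref{Howard} as an input (via its truncations \eqref{Cooper1} and Lemma \ref{Gomez}) but never establishes it, so as a proof of the stated lemma it is vacuous. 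For what it is worth, the paper does not prove the lemma either; it is quoted from Cooper and Howard \cite{Howard:2011}.

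If you want to actually prove the identity, the natural route is induction on $n$ using the three--term recursion \eqref{fibb2}, $F_{n}^{(k)}=2F_{n-1}^{(k)}-F_{n-k-1}^{(k)}$. Substituting the claimed formula for $F_{n-1}^{(k)}$ and $F_{n-k-1}^{(k)}$ and reindexing the second sum by $j\mapsto j+1$ shows that the induction closes provided the coefficients satisfy $C_{n,j}=C_{n-1,j}-C_{n-k-1,j-1}$ (with the convention that the leading term $2^{n-k-3}$ of $F_{n-k-1}^{(k)}$ is absorbed as the $j=1$ contribution); this recurrence for $C_{n,j}$ is a direct consequence of Pascal's rule applied to each of the two binomial coefficients. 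One must also verify the base cases $k+2\le n\le 2k+2$, where the sum has a single term and the formula reduces to $F_{n}^{(k)}=2^{n-2}-(n-k)2^{n-k-3}$, which can be checked against \eqref{fibb1} directly, and handle the boundary where $\lfloor (n+k)/(k+1)\rfloor$ increases. None of this appears in your proposal.
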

In the above, we have denoted by $ \lfloor x \rfloor $ the greatest integer less than or equal to $ x $ and also used the convention that $ \binom{a}{b} = 0 $ if either $ a<b $ or if one of $ a $ or $ b $ is negative. In particular, if we assume that $ k+2\leq n\leq 2k+2 $, then $ \lfloor (n+k)/(k+1)\rfloor =2 $, and the formula becomes
\begin{eqnarray}\label{Cooper1}
F_{n}^{(k)}&=&2^{n-2}-(n-k)\cdot 2^{n-k-3}.
\end{eqnarray}

The following estimate was proved by G{\'o}mez and Luca \cite{Gomez}. They used the above result  Lemma \ref{Howard} to prove it.
\begin{lemma}\label{Gomez}
If $ n<2^{k} $, then the following estimates hold:
\begin{eqnarray}\label{Cooper2}
F_{n}^{(k)}&=&2^{n-2}\left(1+\dfrac{k-n}{2^{k+1}}+\dfrac{f(k,n)}{2^{2k+2}}+\zeta(k,n)\right),
\end{eqnarray}
where $ f(k,n)=\frac{1}{2}(z-1)(z+2);~ z=2k-n $ and $ \zeta = \zeta(k,n) $ is a real number such that
\begin{eqnarray*}
|\zeta| < \dfrac{4n^{3}}{2^{3k+3}}.
\end{eqnarray*}
\end{lemma}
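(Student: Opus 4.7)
The plan is to start from the exact dyadic identity of Cooper--Howard (Lemma~\ref{Howard}), divide through by $2^{n-2}$, isolate the $j=1$ and $j=2$ summands to produce the named correction terms $(k-n)/2^{k+1}$ and $f(k,n)/2^{2k+2}$, and then control the tail $\sum_{j\ge 3}$ using the hypothesis $n<2^k$.

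First I would evaluate the $j=1$ summand. With the convention $\binom{a}{b}=0$ whenever $b<0$, one computes
\[ C_{n,1}=-\!\left[\binom{n-k}{1}-\binom{n-k-2}{-1}\right]=-(n-k), \]
so the $j=1$ term of Lemma~\ref{Howard} is $(k-n)\,2^{n-k-3}$, and dividing by $2^{n-2}$ yields exactly $(k-n)/2^{k+1}$. For the $j=2$ term, set $z:=2k-n$ and treat $\binom{n-2k}{2}$ as a polynomial in $n$; then
\[ C_{n,2}=\binom{n-2k}{2}-\binom{n-2k-2}{0}=\frac{(-z)(-z-1)}{2}-1=\frac{z^2+z-2}{2}=\frac{(z-1)(z+2)}{2}=f(k,n), \]
whose contribution to $F_n^{(k)}/2^{n-2}$ is $f(k,n)/2^{2k+2}$.

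For the tail I would use the elementary bound
\[ |C_{n,j}|\le\binom{n-jk}{j}+\binom{n-jk-2}{j-2}\le\frac{n^j}{j!}+\frac{n^{j-2}}{(j-2)!}\qquad(j\ge 2), \]
which gives
\[ \frac{1}{2^{n-2}}\left|\sum_{j\ge 3}C_{n,j}\,2^{n-(k+1)j-2}\right|\le\sum_{j\ge 3}\left(\frac{n^j}{j!}+\frac{n^{j-2}}{(j-2)!}\right)\frac{1}{2^{(k+1)j}}. \]
The hypothesis $n<2^k$ yields $r:=n/2^{k+1}<1/2$. The first sum on the right is bounded by $\sum_{j\ge 3}r^j/j!\le 2r^3=2n^3/2^{3k+3}$, and after the shift $i=j-2$ the second becomes $2^{-2(k+1)}\sum_{i\ge 1}r^i/i!\le 2^{-2k-2}\cdot 2r=n/2^{3k+2}$, which is itself $\le 2n^3/2^{3k+3}$ since $n\ge 1$. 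Adding the two estimates gives the desired bound $|\zeta|<4n^3/2^{3k+3}$.

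The main obstacle is a careful treatment of boundary behaviour. The Cooper--Howard sum is truncated at $j=\lfloor(n+k)/(k+1)\rfloor-1$, and for small $n$ (roughly $n\le 2k+1$) the combinatorial quantity $C_{n,2}$ vanishes while the polynomial $f(k,n)$ generally does not, so the identification $C_{n,2}=f(k,n)$ must be replaced by a direct verification that the mismatch between $f(k,n)/2^{2k+2}$ and the actual $j=2$ contribution is absorbed into the claimed bound $4n^3/2^{3k+3}$. In the main regime where the sum genuinely reaches $j\ge 2$, the three steps above give the expansion without further work.
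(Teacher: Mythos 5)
Your route --- divide the Cooper--Howard identity of Lemma \ref{Howard} by $2^{n-2}$, read off the $j=1$ and $j=2$ summands as the two named correction terms, and bound the tail $j\ge 3$ using $r=n/2^{k+1}<1/2$ --- is exactly the one the paper points to (the paper itself gives no proof; it cites G\'omez--Luca and says only that Lemma \ref{Howard} is the tool). Your main computations are correct: $C_{n,1}=k-n$; the identity $C_{n,2}=f(k,n)$ holds whenever the $j=2$ term genuinely occurs in the sum, i.e.\ for $n\ge 2k+3$, since then $\binom{n-2k}{2}$ and $\binom{n-2k-2}{0}$ take their polynomial values; and the tail estimate $\sum_{j\ge 3}\bigl(n^j/j!+n^{j-2}/(j-2)!\bigr)2^{-(k+1)j}<4n^3/2^{3k+3}$ checks out (your constants $2r^3$ and $2r$ are generous but valid since $e^r<2$).

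The gap is in your final paragraph. You correctly observe that for $k+2\le n\le 2k+2$ the Cooper--Howard sum stops at $j=1$ (this is \eqref{Cooper1}), so the true second-order contribution is $0$ while $f(k,n)$ is not, and you propose to absorb the mismatch $f(k,n)/2^{2k+2}$ into the error bound. That cannot work: it would require $|f(k,n)|\cdot 2^{k-1}<n^3$, and at $n=k+2$ one has $z=k-2$ and $|f(k,n)|=\tfrac12 k(k-3)$, so the left side grows like $k^2 2^{k}$ against a right side of order $k^3$. Concretely, for $k=10$, $n=12$ we have $F_{12}^{(10)}=1023=2^{10}\bigl(1-2/2^{11}\bigr)$ exactly, which forces $\zeta=-f(10,12)/2^{22}=-35/2^{22}\approx -8.3\times 10^{-6}$, exceeding the claimed bound $4\cdot 12^3/2^{33}\approx 8.0\times 10^{-7}$; the range $2\le n\le k+1$, where $F_n^{(k)}=2^{n-2}$ and even the term $(k-n)/2^{k+1}$ is spurious, is worse still. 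So the statement is false under the sole hypothesis $n<2^k$, and your argument is complete only with the additional hypothesis $n\ge 2k+3$. That is in fact the only regime in which the paper uses the expansion at full strength; for smaller indices it only invokes the lumped bound $|\zeta_1|<\tfrac12(y+y^2+y^3)$ of \eqref{eq:Fn1}, which your computation does justify. The honest fix is to state and prove the lemma for $n\ge 2k+3$ (tail empty for $n\le 3k+3$, your tail bound for $n\ge 3k+4$) and record the weaker lumped estimate separately for $n\le 2k+2$, rather than claiming the mismatch is absorbed.
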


\section{Parametric families of solutions}

Assume that $ (n,m)\neq (n_{1}, m_{1}) $ are such that
$$F_{n}^{(k)} - 2^{m} = F_{n_{1}}^{(k)}-2^{m_{1}}.$$
If $ m=m_{1} $, then $ F_{n}^{(k)} = F_{n_{1}}^{(k)} $ and since $ \min\{n,n_{1}\}\geq 2$, we get that $ n=n_{1} $. Thus, $ (n,m) = (n_{1}, m_{1}) $, contradicting our assumption. Hence, $ m\neq m_{1} $, and we may assume without loss of generality that $ m>m_{1}\geq 0 $. Since
\begin{eqnarray}
\label{fala1}
F_{n}^{(k)} - F_{n_{1}}^{(k)} &=& 2^{m}-2^{m_{1}},
\end{eqnarray}
and the right--hand side of \eqref{fala1} is positive, we get that the left--hand side of
\eqref{fala1} is also positive and so $ n>n_{1} $. Thus, since $ F_{1}^{(k)}=F_{2}^{(k)}=1 $,   we may assume that $ n> n_{1}\geq 2 $.

\medskip
We analyze the possible situations.
\medskip

\noindent {\bf Case 1.}
Assume that $ 2\leq n_{1}<n\leq k+1 $. Then, by \eqref{Fibbo111}, we have
\begin{eqnarray*}
F_{n_{1}}^{(k)}=2^{n_{1}-2}\qquad {\rm and} \qquad F_{n}^{(k)}=2^{n-2}
\end{eqnarray*}
so, by substituting in \eqref{fala1}, we get
\begin{eqnarray*}
2^{m}-2^{m_{1}} = 2^{n-2}-2^{n_{1}-2}.
\end{eqnarray*}
The number on the left--hand side of the above equation is $ 2^{m-1}+\cdots+2^{m_{1}} $ and the number on the right--hand side is $ 2^{n-3}+\cdots+2^{n_{1}-2} $. So, by the uniqueness of the binary representation we have $ m=n-2 $ and $ m_{1}=n_{1}-2 $, giving $ c=0 $. All powers of $2$ in the
$k$--generalized Fibonacci sequence are known to be just the numbers $F_s^{(k)}$ with $1\le s\le k+1$ (see \cite{BBL17}). This gives (i) from  the statement of Theorem \ref{Main}.

\medskip

From now on, we assume that $ c\neq 0 $.

\medskip

\noindent {\bf Case 2.}
Assume that $ 2\leq n_{1}\leq k+1 $ and $ k+2\leq n\leq 2k+2 $. Then, by \eqref{Fibbo111} and \eqref{Cooper1}, we have
\begin{eqnarray*}
F_{n_{1}}^{(k)}=2^{n_{1}-2} \qquad {\rm and} \qquad F_{n}^{(k)}=2^{n-2}-(n-k)\cdot 2^{n-k-3}.
\end{eqnarray*}
So, by substituting in \eqref{fala1} as before, we get
\begin{eqnarray}
\label{aux1}
2^{n-2}-2^{n_{1}-2}-(n-k)\cdot 2^{n-k-3} = 2^{m}-2^{m_{1}}.
\end{eqnarray}
In the left--hand side of the above equation, we have
\begin{eqnarray*}
2^{n-2}-2^{n_{1}-2}-(n-k)\cdot 2^{n-k-3}\geq 2^{n-3}-(n-k)\cdot 2^{n-k-3}>2^{n-4}.
\end{eqnarray*}
Indeed the last inequality is equivalent to $ 2^{n-4}> (n-k)\cdot 2^{n-k-3} $, or $ 2^{k-1}>n-k $. Since $ n\leq 2k+2 $, it suffices that $ 2^{k-1}>k+2 $, which indeed holds for all $ k\geq 4 $. Furthermore, unless $ n_{1}=n-1 $, we have
\begin{eqnarray*}
2^{n-2}-2^{n_{1}-2}-(n-k)\cdot 2^{n-k-3}\geq 2^{n-2}-2^{n-4}-(n-k)\cdot 2^{n-k-3}>2^{n-3},
\end{eqnarray*}
from the preceding argument. Thus, we have either $ n_{1} = n-1 $ and then
\begin{eqnarray*}
2^{n-3} \ge 2^{n-2}-2^{n_{1}-2} - (n-k)\cdot 2^{n-k-3}> 2^{n-4},
\end{eqnarray*}
which leads to
\begin{eqnarray*}
2^{n-3} \ge 2^{m}-2^{m_{1}}>2^{n-4},
\end{eqnarray*}
showing that $ m=n-3 $, or $ n_{1}<n-1 $, in which case
\begin{eqnarray*}
2^{n-2}>2^{n-2}-2^{n_{1}-2}-(n-k)\cdot 2^{n-k-3}>2^{n-3},
\end{eqnarray*}
showing that $ m=n-2 $.

We study the two cases. When $ n_{1}=n-1 $, then since $ n_{1}\leq k+1 $, it follows that $ n\leq k+2 $.
Since in fact $ n\geq k+2 $, we get $ n=k+2 $. Then $ m=n-3=k-1 $, so from \eqref{aux1}
\begin{eqnarray*}
2^{k-1}-2^{m_{1}} = 2^{m}-2^{m_{1}}=2^{k}-2^{k-1}-2\cdot 2^{-1}=2^{k-1}-1,
\end{eqnarray*}
showing that $ m_{1}=0 $. So, we have found the parametric family
\begin{eqnarray*}
(n,m,n_{1}, m_{1}) = (k+2, k-1, k+1, 0)
\end{eqnarray*}
for which $c= 2^{k-1}-1$ according to \eqref{Problem}. This corresponds to situation (ii--$a$) in the statement of Theorem \ref{Main}.

A different possibility is $ n_{1}<n-1 $, in which case $ m=n-2 $. Now \eqref{aux1} leads to
\begin{eqnarray*}
2^{n-2}-2^{n_{1}-2}-(n-k)\cdot 2^{n-k-3}=2^{n-2}-2^{m_{1}}
\end{eqnarray*}
so
\begin{eqnarray*}
(n-k)\cdot 2^{n-k-3}=2^{m_{1}}-2^{n_{1}-2}.
\end{eqnarray*}
Simplifying the powers of $ 2 $, we get
$$
n-k=2^{m_1-(n-k)+3}-2^{n_1-(n-k)+1}.
$$
Thus, $n-k\in [2,k+2]$ is a difference of two powers of $2$. Take any number in $[2,k+2]$ which is a difference of two powers of $2$. Let it be $2^a-2^b$. Note that $a>b$ and $(a,b)\ne (1,0)$. Set
$$
n-k=2^a-2^b.
$$
This gives $n=k+2^a-2^b\in [k+2,2k+2]$. Next we have $n_1-(n-k)+1=b$. Then $n_1=b+(n-k)-1$. But $n_1\le k+1$. This gives $(b-1)+(n-k)\le k+1$, so $(b-1)+2^a-2^b\le k+1$. But we started with $2^a-2^b\in [2,k+2]$. So, in fact we get
$$
b+2^a-2^b\le k+2
$$
and $2^a-2^b\ge 2$. If $n-k=2$, then $(a,b)=(2,1)$, otherwise $n-k\ge 3$ and $b\ge 0$. Finally, $m_1+3-(n-k)=a$. Thus,
$$m_1=(a-3)+(n-k)=(a-1)+((n-k)-2)$$ and this is nonnegative
from the preceding discussion. So, the family is
$$
(n,m,n_1,m_1)=(k+2^a-2^b, k+2^a-2^b-2, b-1+2^a-2^b,a-3+2^a-2^b),
$$
where $(a,b)$ are such that $a>b\ge 0$, $(a,b)\ne (1,0)$, and $b+2^a-2^b\le k+2$. Furthermore, by \eqref{Problem}, we have
$c = 2^{b-3+2^a-2^b}-2^{a-3+2^a-2^b}$. This corresponds to situation (ii--$b$) in the statement of Theorem  \ref{Main}.

\medskip
\noindent {\bf Case 3.}
Assume that $k+2\le n_1<n\le 2k+2$. Then, by \eqref{Cooper1}, we have that
$$
F_{n_1}^{(k)}=2^{n_1-2}-(n_1-k) 2^{n_1-k-3} \qquad {\rm and} \qquad F_n^{(k)}=2^{n-2}-(n-k)2^{n-k-3}.
$$
Then by a similar substitution as before, equation \eqref{fala1}
translates into
\begin{eqnarray}
\label{aux2}
2^{n-2}-2^{n_1-2}-\left((n-k)\cdot 2^{n-k-3}-(n_1-k)\cdot 2^{n_1-k-3}\right)=2^m-2^{m_1}.
\end{eqnarray}
Since $n_1\leq n-1$, the left--hand side is at least
\begin{eqnarray*}
2^{n-2}-2^{n_1-2} &-& ((n-k)\cdot 2^{n-k-3}-(n_1-k)\cdot2^{n_1-k-3})\\
&\ge& 2^{n-2}-2^{n-3} -  ((n-k)\cdot 2^{n-k-3}-(n-k-1)\cdot 2^{n-k-4})\\
& = & 2^{n-3}-(n-k+1)\cdot 2^{n-k-4}>2^{n-4},
\end{eqnarray*}
where the last inequality is equivalent to
$$
2^{n-4}>(n-k+1)\cdot 2^{n-k-4},
$$
or
$$
2^k>n-k+1.
$$
Since $n-k\le k+2$, it suffices that $2^k>k+2+1=k+3$, which holds for $k\ge 4$. Thus, if $n_1=n-1$, then
$$
2^{n-3}>2^{n-2}-2^{n-3}-\left((n-k)\cdot 2^{n-k-3}-(n-k-1)\cdot 2^{n-k-4}\right)>2^{n-4},
$$
so
$$
2^{n-3}>2^m-2^{m_1}>2^{n-4},
$$
giving $m=n-3$. In this case, we get from \eqref{aux2},
$$
2^{n-2}-2^{n-3}-(n-k+1)\cdot 2^{n-k-4}=2^{n-3}-2^{m_1},
$$
so
$$
(n-k+1)\cdot 2^{n-k-4}=2^{m_1},
$$
giving
$$
n-k+1=2^{m_1+4-(n-k)}.
$$
Thus, $n-k+1=2^t$ is a power of two in the interval $[3,k+3]$ (so $t\ge 2$). Further, $n=2^t+k-1,~n_1=n-1=2^t+k-2,~m=n-3=2^t+k-4$ and $m_1=n-k-4+t=2^t+t-5$.
Since $t\ge 2$, we get that $m_1>0$. Hence,
\begin{eqnarray*}
(n, m, n_{1}, m_{1}) = (k+2^t-1, k+2^t-4, k+2^t-2, t+2^t-5)
\end{eqnarray*}
which corresponds to the parametric family (iii), with $c = 2^{k+2^t-4}+2^{2^t-4}-2^{t+2^t-4}$, in the statement of the Theorem \ref{Main}.
\medskip

Next we consider the situation $n_1<n-1$. We show that there are no solutions in this case. Then,
\begin{eqnarray*}
2^{n-2} & > & 2^{n-2}-2^{n_1-2}-((n-k)\cdot 2^{n-k-3}-(n_1-k)\cdot 2^{n_1-k-3})\\
& \ge &
2^{n-2}-2^{n-4}-\left((n-k)\cdot 2^{n-k-3}-(n-k-2)\cdot 2^{n-k-5}\right)\\
& > & 2^{n-3}.
\end{eqnarray*}
The last inequality is equivalent to
$$
2^{n-4}>(n-k)\cdot 2^{n-k-3}-(n-k-2)\cdot 2^{n-k-5},
$$
which is implied by
$$
2^{n-4}>(n-k)\cdot 2^{n-k-3},
$$
or
$$
2^{k-1}>n-k.
$$
Since $n-k\le k+2$, it suffices that $2^{k-1}> k+2$ and this holds for all $k\ge 4$. Thus, for $n_1<n-1$, we have
$$
2^{n-2}>F_n^{(k)}-F_{n_1}^{(k)}>2^{n-3},
$$
so
$$
2^{n-2}>2^m-2^{m_1}>2^{n-3},
$$
showing that $m=n-2$. In this case, we have by \eqref{aux2}, that
$$
2^{n-2}-2^{n_1-2}-(n-k)\cdot 2^{n-k-3}+(n_1-k)\cdot 2^{n_1-k-3}=2^{n-2}-2^{m_1},
$$
giving
$$
(n-k)\cdot 2^{n-k-3}-(n_1-k)\cdot 2^{n_1-k-3}=2^{m_1}-2^{n_1-2}.
$$
The left--hand side is positive therefore so is the right--hand side. Thus,
\begin{equation}\label{eq:222}
2^{n_1-k-3}(2^{n-n_1}(n-k)-(n_{1}-k))=2^{n_1-2}(2^{m_1-n_1+2}-1).
\end{equation}

To proceed, we write
$$
n-k=2^{\alpha} u\qquad {\text{\rm and}}\qquad n_1-k=2^{\alpha_1} u_1,
$$
where $\alpha,~\alpha_1$ are nonnegative and $u,~u_1$ are odd. Since $n-k,~n_1-k\in [2,k+2]$, it follows $2^{\alpha}\le k+2$ and $2^{\alpha_1}\le k+2$. Hence,
$\max\{\alpha,\alpha_1\}\le \log(k+2)/\log 2$. Equation \eqref{eq:222} becomes
\begin{equation}
\label{eq:2222}
2^{n_1-k-3}(2^{\alpha+n-n_1} u-2^{\alpha_1} u_1)=2^{n_1-2}(2^{m_1-n_1+2}-1).
\end{equation}

We distinguish various cases.

\medskip

\noindent{\bf Case 3.1} $\alpha+n-n_1=\alpha_1$.
In this case, by \eqref{eq:2222}, we have
\begin{equation}
\label{eq:u=u1}
2^{n_1-k-3+\alpha_1}(u-u_1)=2^{n_1-2}(2^{m_1-n_1+2}-1).
\end{equation}
Note that we cannot have $u=u_1$ (otherwise we get $n=n_1$, a contradiction). Since the exponent of $2$ in the right in \eqref{eq:u=u1} is exactly $n_1-2$ and in the left is at least $n_1-k-3+\alpha_1$, we get that $n_1-2\ge n_1-k-3+\alpha_1$, so $k+1\ge \alpha_1$, and
$$
u-u_1=2^{k+1-\alpha_1}(2^{m_1-n_1+2}-1).
$$
We deduce that the following inequality holds:
$$
 2^{k+1-\alpha_1}\mid u-u_1,\quad {\text{\rm so}}\quad k+1-\alpha_1\le \frac{\log(u-u_1)}{\log 2}\le \frac{\log (k+1)}{\log 2}.
$$
Thus,
$$
k+1  =  (k+1-\alpha_1)+\alpha_1\le \frac{\log(k+1)}{\log 2}+\frac{\log(k+2)}{\log 2},
$$
which yields
$$
2^{k+1}\le (k+2)(k+1),
$$
so $k\le 3$. So, this case cannot lead to infinitely many solutions.

\medskip

\noindent{\bf Case 3.2} $\alpha+n-n_1<\alpha_1$.
In this case, by \eqref{eq:2222}, we now have
$$
2^{n-k-3+\alpha}(u-2^{\alpha_1-\alpha-n+n_1} u_1)=2^{n_1-2} (2^{m_1-n_1+2}-1).
$$
Identifying factors which are powers of $2$ in both sides, we have
$$
n_1=n+\alpha-k-1.
$$
Since
$$
n-n_1< \alpha_1-\alpha\le \alpha_1\le \frac{\log(k+2)}{\log 2},
$$
we have
$$
k+1  =  (n-n_1)+\alpha\le  \frac{\log(k+2)}{\log 2}+\frac{\log(k+2)}{\log 2},
$$
giving
$$
2^{k+1}\le (k+2)^2,
$$
so $k\le 4$.

Thus, as in the previous case, this situation cannot lead us to infinitely many solutions either.

\medskip

\noindent{\bf Case 3.3} $\alpha_1<\alpha+n-n_1$.
\medskip
In this case, \eqref{eq:2222} becomes
$$
2^{n_1-k-3+\alpha_1}(2^{\alpha-\alpha_1+n-n_1} u-u_1)=2^{n_1-2}(2^{m_1-n_1+2}-1).
$$
Identifying powers of $2$ in both sides above, we get
$$
k+1=\alpha_1.
$$
Hence,
$$
k+1\le \frac{\log (k+2)}{\log 2},
$$
giving $2^{k+1}\le k+2$, so $k\le 1$, a contradiction.

The last parametric family from the statement of Theorem \ref{Main} will be identified in the next section.

\section{Solutions with $n\ge 2k+3$}

\medskip
From now on, we searched for solutions other than the ones given in Theorem \ref{Main} (i), (ii), and (iii), with the aim is to show that perhaps they are none except for some sporadic ones with $k<k_0$ with some small $k_0$. Then the problem will be solved by finding individually for every $k\in [4,k_0]$, the values of $c$ such that \eqref{Problem} has some solution $(n,m,n_1,m_1)$ with $n>n_1, ~m>m_1$ and determining for each $c$ all such representations. It turns out that this program does not quite work out since along the way we find parametric family (iv) with $n=2k+3$, but afterwards all does work out and we are able to show that indeed if $n>2k+3$, then $k\le 790$.

So, let's get to work. We go back to \eqref{Problem} and assume that $n\ge 2k+3$. Suppose first that $m\ge n-1$. We recall equality \eqref{fala1}:
$$
2^m-2^{m_1}=F_n^{(k)}-F_{n_1}^{(k)}.
$$
The left--hand side is positive and
$$
2^m-2^{m_1}\ge 2^{m-1}\ge 2^{n-2}>F_n^{(k)}>F_n^{(k)}-F_{n_1}^{(k)},
$$
where we used the fact that $F_n^{(k)} < 2^{n-2}$ for $n\ge k+2$. Thus,
$m\le n-2$. Note that $n\ge 2k+3$, so $n-2k\ge 3$.

We put $y:=n/2^k$, and assume that
\begin{equation}
\label{hip}
n^3 < 2^{k-5}, \qquad {\rm so } \qquad \quad y<1/4.
\end{equation}
Thus, by Lemma \ref{Gomez},
we have
$$
F_n^{(k)}=2^{n-2}(1-\zeta), \qquad {\rm where} \qquad |\zeta|<\frac{1}{2}\left(y+y^2+y^3\right).
$$
Similarly,
\begin{equation}
\label{eq:Fn1}
F_{n_1}^{(k)}=2^{n_1-2}(1-\zeta_1), \qquad {\rm where ~~ also} \qquad  |\zeta_1|<\frac{1}{2}(y+y^2+y^3).
\end{equation}

We get from \eqref{fala1}
\begin{equation}
\label{eq:10}
\left|(2^m-2^{m_1}) - (2^{n-2}-2^{n_1-2})\right|<(2^{n-2}+2^{n_1-2})\frac{(y+y^2+y^3)}{2}<2^{n-2} y.
\end{equation}
If $m\le n-4$, then the left--hand side in \eqref{eq:10} is at least
$$
(2^{n-2}-2^{n_1-2})-2^{n-4}\ge 2^{n-3}-2^{n-4}\ge 2^{n-4},
$$
showing that
$$
2^{n-4}\le 2^{n-2} y,
$$
giving $y\ge1/4$, a contradiction to \eqref{hip}. Further, assuming that $m=n-3$ but $n_1<n-1$, the left--hand side in formula \eqref{eq:10} is at least
$$
(2^{n-2}-2^{n_1-2})-2^m\ge 2^{n-2}-2^{n-4}-2^{n-3}=2^{n-4},
$$
and we get to the same contradiction to \eqref{hip}, namely that $y\ge 1/4$. Thus, we conclude that either $(m,n_1)=(n-3,n-1)$, or $m=n-2$. The first case gives from \eqref{fala1}
\begin{equation}
\label{eq:13}
F_{n}^{(k)}-F_{n-1}^{(k)}=2^{n-3}-2^{m_1}.
\end{equation}
Using Lemma \ref{Gomez}, we get
\begin{equation}
\label{eq:Fn-aux2}
F_n^{(k)}=2^{n-2}\left(1-\frac{n-k}{2^{k+1}}+\eta\right)\quad {\rm and} \quad F_{n-1}^{(k)}=2^{n-3}\left(1-\frac{n-k-1}{2^{k+1}}+\eta_1\right),
\end{equation}
where
$$
\max\{|\eta|,|\eta_1|\} \le \frac{1}{2}(y^2+y^3)<y^2.
$$
Putting these into \eqref{eq:13}, we get
$$
\left|-2^{n-k-3}(n-k)+2^{n-k-4}(n-k-1)+2^{m_1}\right|<2^{n-2}|\eta|+2^{n-3}|\eta_1|<2^{n-1}y^2.
$$
In the left--hand side, we have the amount
$$
|2^{m_1}-2^{n-k-4}(n-k+1)|.
$$
If $m_1\le n-k-4$, then this amount exceeds $2^{n-k-4}(n-k)>2^{n-k-4}$. If $m_1> n-k-4$, then the above number can be rewritten as
$$
2^{n-k-4}|n-k+1-2^{m_1-(n-k-4)}|.
$$
If $n-k+1 \neq 2^{m_1-(n-k-4)}$, then the above amount is $\ge 2^{n-k-4}$. We thus get in all the above instances
$$
2^{n-k-4}\le |2^{m_1}-2^{n-k-4}(n-k+1)|<2^{n-1} y^2<\frac{2^{n-1}n^2}{2^{2k}},
$$
giving
$$
n^2>2^{k-3}\qquad {\rm so} \qquad n > 2^{(k-3)/2},
$$
a contradiction to \eqref{hip}. If $n-k+1 = 2^{m_1-(n-k-4)}$, we consider one more term in \eqref{eq:Fn-aux2}:
\begin{eqnarray*}
\label{eq:Fn-aux3}
F_n^{(k)} & = & 2^{n-2}-2^{n-k-3}(n-k)+2^{n-2k-5}(n-2k+1)(n-2k-2)+2^{n-2}\delta,\\
F_{n_1}^{(k)} & = & 2^{n-3}-2^{n-k-4}(n-k-1)+2^{n-2k-6}(n-2k)(n-2k-3)+2^{n-3}\delta_1
\end{eqnarray*}
where
$$
2^{n-2}|\delta| < 2^{n-3} y^3<2^{n-3k-3}n^3 \quad {\rm \and} \quad 2^{n-3}|\delta_1|<2^{n-4} y^3 < 2^{n-3k-4}n^3.
$$
Thus, by \eqref{hip},
\begin{eqnarray}
\label{ineq1}
\max\{2^{n-2}|\delta|, 2^{n-3}|\delta_1|\} < 2^{n-2k-8}.
\end{eqnarray}
Putting these into \eqref{eq:13}, we get
$$
2^{n-2k-6}\left|2(n-2k+1)(n-2k-2) - (n-2k)(n-2k-3)\right| < 2^{n-2}|\delta|+2^{n-3}|\delta_1| < 2^{n-2k-7}.
$$
Taking $w:=n-2k$, we have that
\begin{eqnarray*}
\label{eq:Fn-aux3}
 w^2+w-4  = |2(w+1)(w-2)-w(w-3)| < 1/2,
\end{eqnarray*}
which is a contradiction for all $k\ge 4$, given that $n\ge2k+3$. So, the situation $(m,n_1)=(n-3,n-1)$ is not possible.

Hence, we continue with the case $m=n-2$. Going back to \eqref{Problem}, we have
\begin{equation}
\label{eq:20}
F_n^{(k)}-2^{n-2}=F_{n_1}^{(k)}-2^{m_1}.
\end{equation}
The number on the left--hand side in \eqref{eq:20} is negative. We will show that $m_1\ge n_1-2$. Indeed, suppose that $m_1\le n_1-3$. Since for us $y<1/4$, we get $|\zeta_1|<1/2$ (see \ref{eq:Fn1}). Further, again by \eqref{eq:Fn1}, we note that $F_{n_1}^{(k)}>2^{n_1-3}\ge 2^{m_1}$, so the
right--hand side in \eqref{eq:20} is positive, a contradiction. Thus, $m_1\ge n_1-2$. The case $m_1=n_1-2$ leads to
\begin{equation}
\label{eq:21}
F_n^{(k)}-2^{n-2}=F_{n_1}^{(k)}-2^{n_1-2}.
\end{equation}
Since $c\ne 0$, it follows that $n_1\ge k+2$. However, we have the following lemma.

\begin{lemma}
The sequence $\{2^{n-2}-F_n^{(k)}\}_{n\ge k+2}$ is increasing for $n\ge k+3$.
\end{lemma}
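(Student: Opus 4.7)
The plan is to convert the statement into a clean recursion for the sequence $a_n:=2^{n-2}-F_n^{(k)}$. Using the three--term recursion \eqref{fibb2}, namely $F_n^{(k)}=2F_{n-1}^{(k)}-F_{n-k-1}^{(k)}$, I compute
$$
a_n = 2^{n-2}-F_n^{(k)} = 2\bigl(2^{n-3}-F_{n-1}^{(k)}\bigr) + F_{n-k-1}^{(k)} = 2a_{n-1}+F_{n-k-1}^{(k)},
$$
valid for all $n\ge 3$. This is the key identity driving everything.

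From this identity, the desired monotonicity $a_n>a_{n-1}$ is equivalent to $a_{n-1}+F_{n-k-1}^{(k)}>0$, so it suffices to check that both summands on the right are nonnegative and at least one is strictly positive when $n\ge k+3$. For the first, the paper has already recalled (just after \eqref{fibb2}, citing \cite{BBL17}) that $F_j^{(k)}<2^{j-2}$ for every $j\ge k+2$; applied with $j=n-1\ge k+2$, this gives $a_{n-1}>0$. For the second, $n-k-1\ge 2\ge 1$, so $F_{n-k-1}^{(k)}\ge 1>0$. Combining, $a_n=2a_{n-1}+F_{n-k-1}^{(k)}>a_{n-1}$, as required.

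There is no real obstacle here: the whole argument hinges on noticing that the three--term recursion for $F_n^{(k)}$ translates into the particularly clean recursion $a_n=2a_{n-1}+F_{n-k-1}^{(k)}$, after which strict positivity of the two summands (each coming from a fact already stated in the paper) closes the proof immediately. I would present the argument in two short displayed lines: first the derivation of the recursion for $a_n$, then the sign check for $n\ge k+3$.
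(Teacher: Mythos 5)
Your proof is correct, and it takes a genuinely different (and arguably slicker) route than the paper's. The paper argues directly on the difference of consecutive terms: it reduces the claim to $2^{n-2}>F_{n-1}^{(k)}+F_{n-2}^{(k)}+\cdots+F_{n-k+1}^{(k)}$ via the full $k$-term recursion, bounds each of the $k-1$ summands by $F_a^{(k)}\le 2^{a-2}$, and sums the resulting geometric series. You instead telescope via the three-term recursion \eqref{fibb2} to get the exact first-order identity $a_n=2a_{n-1}+F_{n-k-1}^{(k)}$ for $a_n:=2^{n-2}-F_n^{(k)}$, after which monotonicity is equivalent to $a_{n-1}+F_{n-k-1}^{(k)}>0$; this follows from the strict inequality $F_{n-1}^{(k)}<2^{n-3}$ for $n-1\ge k+2$ (quoted in the paper from \cite{BBL17}) together with $F_{n-k-1}^{(k)}\ge 1$ for $n-k-1\ge 2$. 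Your approach buys an exact recursion (hence also quantitative control, e.g.\ it shows $a_n>2a_{n-1}$, so the sequence at least doubles), and it invokes the needed bound only once rather than term by term; the paper's version is marginally more self-contained in that it only uses the weak bound $F_a^{(k)}\le 2^{a-2}$ for all $a\ge 2$ rather than the strict one at $n-1$. Both arguments rest on facts already recalled in Section 1, so either is acceptable.
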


\begin{proof}
We want
$$
2^{n-1}-F_{n+1}^{(k)}>2^{n-2}-F_n^{(k)},
$$
which is equivalent to
$$
2^{n-2}>F_{n-1}^{(k)}+F_{n-2}^{(k)}+\cdots+F_{n-k+1}^{(k)}.
$$
There are $k-1$ terms in the right--hand side. Each of them satisfies $F_{n-1-j}^{(k)}\le 2^{n-3-j}$ for $j=0,1,\ldots,k-2$ because $F_a\le 2^{a-2}$ holds for all $a\ge 2$. Thus, it suffices that
$$
2^{n-2}>2^{n-3}+2^{n-4}+\cdots+2^{n-k-1},
$$
which is obvious.
\end{proof}

Thus, \eqref{eq:21} is impossible. Hence, $m_1\ge n_1-1$.  Using the first identity in \eqref{eq:Fn-aux2}, we have that the left--hand side in \eqref{eq:20} is
\begin{equation}
\label{eq*}
-2^{n-k-3}(n-k)+2^{n-2}\eta,
\end{equation}
where $|\eta|<(y^2+y^3)/2<y^2$. Note that since
$$
y^2=\frac{n^2}{2^{2k}}<\frac{1}{2^{k+3}}\qquad ({\text{\rm by}}~~\ref{hip}),
$$
it follows that
\begin{equation}
\label{eq:eta}
2^{n-2}|\eta|<2^{n-k-5}.
\end{equation}
Thus, the left--hand side of \eqref{eq:20} is in the interval
$$
(-2^{n-k-3}(n-k+1/2),-2^{n-k-3}(n-k-1/2)).
$$
Now the right--hand side of \eqref{eq:20} is in the interval $(-2^{m_1}, -2^{m_1-1}]$, where for the right--hand extreme of the interval we used the fact that $F_{n_1}^{(k)}\le 2^{n_1-2}\le 2^{m_1-1}$. Comparing them we get
$$
-2^{n-k-3}(n-k+1/2)<-2^{m_1-1}\qquad {\text{\rm and}}\qquad -2^{n-k-3}(n-k-1/2)>-2^{m_1},
$$
which gives
$$
2^{m_1-1}<2^{n-k-3}(n-k+1/2)\qquad {\text{\rm and}}\qquad 2^{n-k-3}(n-k-1/2)<2^{m_1}.
$$
In particular, $m_1\ge n-k-3$, so
$$
2^{m_1-(n-k-3)-1}\le n-k\le 2^{m_1-(n-k-3)}.
$$
We thus get, from \eqref{eq:20} and \eqref{eq*}, that
\begin{equation}
\label{eq:30}
-2^{n-k-3}(n-k-2^{m_1-(n-k-3)})=F_{n_1}^{(k)}-2^{n-2}\eta.
\end{equation}

We distinguish two cases.

\medskip
\noindent{\bf Case 1.} Assume that $n_1<n-k-1$.
\medskip

Then $F_{n_1}^{(k)}<2^{n_1-2}\le 2^{n-k-4}$. Using also \eqref{eq:eta},   we get
$$
2^{n-k-3}\left|(n-k)-2^{m_1-(n-k-3)}\right|<\max\{F_{n_1}^{(k)},2^{n-2}|\eta|\}<2^{n-k-4},
$$
so $n-k-2^{m_1-(n-k-3)}$ is an integer which is at most $1/2$ in absolute value. Hence, it is zero. Thus, $n-k=2^{m_1-(n-k-3)}$.
We now go one more step and say that
\begin{eqnarray*}
F_n^{(k)} & = & 2^{n-2}-2^{n-k-3}(n-k)+2^{n-2k-5}(n-2k+1)(n-2k-2)+2^{n-2}\delta,\\
F_{n_1}^{(k)} & = & 2^{n_1-2}(1-\eta_1),\nonumber
\end{eqnarray*}
where, by \eqref{ineq1},
$$
2^{n-2}|\delta|<2^{n-2k-8}.
$$
Further, by \eqref{eq:Fn1},
$$
2^{n_1-2} |\eta_1|<2^{n_1-3} y^2<2^{n-3k-4} n^2<2^{n-2k-8}.
$$
Equation \eqref{eq:20} now implies that
$$
-2^{n-k-3}(n-k)+2^{n-2k-5}(n-2k+1)(n-2k-2)+2^{n-2}\delta = 2^{n_1-2} - 2^{n_1-2}\eta_1 - 2^{m_1},
$$
so, given that $n-k = 2^{m_1-(n-k-3)}$,
\begin{equation}
\label{eq:111}
2^{n-2k-5}(n-2k+1)(n-2k-2)-2^{n_1-2}=-2^{n-2}\delta - 2^{n_1-2}\eta_1.
\end{equation}
Assume that $n_1\le n-2k-4$. Then
\begin{eqnarray*}
2^{n-2k-5}\le 2^{n-2k-5}(n-2k+1)(n-2k-2) & \le & 2^{n_1-2}+2^{n-2}|\delta|+2^{n_1-2}|\eta_1|\\
& < & 3\times 2^{n-2k-7}<2^{n-2k-5},
\end{eqnarray*}
which is a contradiction. Thus, we must have $n_1 \ge n-2k-3$, so
\begin{eqnarray*}
 2^{n-2k-5}\left|(n-2k+1)(n-2k-2)-2^{n_1-2-(n-2k-5)}\right| & \le & 2^{n-2}|\delta|+2^{n_1-2}|\eta_1|\\
& < & 2^{n-2k-7}.
\end{eqnarray*}
The left--hand side above is an integer divisible by $2^{n-2k-5}$. Since it is smaller than $2^{n-2k-7}$, it must be the zero integer. Thus, with $w=n-2k$, we have
$$
(w+1)(w-2)=2^{n_1-2-(n-2k-5)}.
$$
In the  left--hand side above, one of the factors $w-2$ and $w+1$ is odd. Since they are both positive and powers of $2$, it follows that the smaller one is $1$. Hence, $w=3$, so
$$
w+1 = 2^2=2^{n_1-2-(n-2k-5)},
$$
giving $n_1-2 = n-2k-3=0$. Thus, $n = 2k+3, n_1 = 2$ and $m = n-2 = 2k+1$. From equality \eqref{eq:20}, we conclude that
$$
2^{2k+1}-2^k(k+3) = 2^{2k+1}- 2^{m_1}
$$
giving $k+3 = 2^t,$ for some integer $t\ge3$ and $m_1 = k+t$. Hence, we obtain the parametric family 
$$
(n, m, n_1, m_1) = (2^{t+1}-3, 2^{t+1}-5, 2, t+2^t-3 )
$$
with $c = 1 - 2^{t+2^t-3}$, which corresponds to situation (iv) in the statement of
the Theorem \ref{Main}.

\medskip
\noindent{\bf Case 2.} $n_1\ge n-k-1$.
\medskip

The equation that we then get from \eqref{eq:Fn1} and \eqref{eq:30} is
$$
2^{n-k-3}\left((n-k)-2^{m_1-(n-k-3)}+2^{n_1-2-(n-k-3)}\right)=2^{n_1-2}\eta_1 + 2^{n-2}\eta.
$$
Given that $m_1<m$ and that we are in the case $m = n-2$, we have
$$
2^{n_1-2} |\eta_1|\le 2^{m_1-1} |\eta_1|\le 2^{n-k-3}(n-k)y<2^{n-2k-3} n^2<2^{n-k-7}.
$$
We thus have
$$
2^{n-k-3}|(n-k)-2^{m_1-(n-k-3)}+2^{n_1-2-(n-k-3)}|<2^{n_1-2}|\eta_1|+2^{n-2}|\eta|<2^{n-k-5},
$$
showing the left--hand side is zero. Thus, $a=m_1-(n-k-3)$, $b=n_1-2-(n-k-3)$ and
$$
n-k=2^a-2^b.
$$
So, $n=k+2^a-2^b$. As in previous iterations, we go one step further and write
\begin{eqnarray*}
F_n^{(k)} & = & 2^{n-2}-2^{n-k-3}(n-k)+2^{n-2k-5}(n-2k+1)(n-2k-2)+2^{n-2}\delta,\\
F_{n_1}^{(k)} & = & 2^{n_1-2}-2^{n_1-k-3}(n_1-k)+2^{n_1-2} \eta_1.
\end{eqnarray*}
Inserting these into equation \eqref{eq:20}, we get
\begin{eqnarray*}
-2^{n-k-3}(n-k) & + & 2^{n-2k-5}(n-2k-2)(n-2k+1)+2^{n-2}\delta\\
& = & 2^{n_1-2}-2^{n_1-k-3}(n_1-k)-2^{m_1}+2^{n_1-2}\eta_1,
\end{eqnarray*}
or
$$
2^{n-2k-5}(n-2k-2)(n-2k+1) + 2^{n_1-k-3}(n_1-k)=-2^{n-2}\delta+2^{n_1-2}\eta_1.
$$
We have $n_1=n-k-1+b$, so $n_1-k-3=n-2k-4+b$ so $n_1-k-3-(n-2k-5)=b+1$ and $n_1-k=n-2k-1+b$. Thus,
$$
2^{n-2k-5}\left((n-2k+1)(n-2k-2) + 2^{b+1}(n-2k-1+b)\right)=-2^{n-2}\delta+2^{n_1-2}\eta_1.
$$
We already know that $2^{n-2}|\delta|<2^{n-2k-8}$. Now $n_1-2=n-k-3+b$, so
$$
2^{n_1-2}=2^{n-k-3} 2^b.
$$
Note that $n-k=2^a-2^{b}\ge 2^{a-1}\ge 2^b$, so $2^b<n$. Thus,
$$
2^{n_1-2} |\eta_1|\le 2^{n-k-3} n y^2\le 2^{n-k-3} n^3/2^{2k}<2^{n-2k-8},
$$
since $n^3<2^{k-5}.$ Hence, we get
$$
2^{n-2k-5}|(n-2k+1)(n-2k-2) + 2^{b+1}(n-2k-1+b)|<2^{n-2k-7},
$$
showing that the number in absolute value is zero, which is a contradiction because $n-2k\ge3$ and $b\ge0$.
In conclusion, there are no solutions with $n>2k+3$ provided that \eqref{hip} holds. 
In the next section, we estimate a value of $k_0$ for which inequality \eqref{hip} is fulfilled for all $k> k_0$.

\section{Establishing an inequality in terms of $ n $ and $ k $ and estimating $k_0$}

Since $ n>n_{1}\geq 2 $, we have that $ F_{n_{1}}^{(k)}  \leq F_{n-1}^{(k)}$ and therefore
$$F_{n}^{(k)} = F_{n-1}^{(k)} + \cdots + F_{n-k}^{(k)} \geq F_{n-1}^{(k)} + \cdots + F_{n-k-1}^{(k)} \geq F_{n_{1}}^{(k)} + \cdots + F_{n-k-1}^{(k)}.$$
So, from the above, \eqref{Fib12} and \eqref{fala1}, we have
\begin{eqnarray}\label{fala4}
\alpha^{n-4}&\leq& F_{n-2}^{(k)}\leq F_{n}^{(k)}-F_{n_{1}}^{(k)} = 2^{m}-2^{m_{1}} < 2^{m}, \text{  and  }\\
\alpha^{n-1}&\geq& F_{n}^{(k)} > F_{n}^{(k)}-F_{n_{1}}^{(k)} = 2^{m}-2^{m_{1}}\geq 2^{m-1},\nonumber
\end{eqnarray}
leading to
\begin{eqnarray}\label{fala2}
1+\left(\dfrac{\log 2}{\log\alpha}\right) (m - 1) < n< \left(\dfrac{\log 2}{\log \alpha}\right)m + 4.
\end{eqnarray}

We note that the above inequality \eqref{fala2} in particular implies that $ m < n< 1.2m+4 $. Moreover, note that we can assume $ n\geq k+2 $, since otherwise, this would give us only the solution for $ c = 0$, which is family (i) of Theorem \ref{Main}.

We assume for technical reasons that $ n>1600 $. By \eqref{approxgap} and \eqref{fala1}, we get
\begin{eqnarray*}
\left|f_{k}(\alpha)\alpha^{n-1} - 2^{m}\right|&=&\left|(f_{k}(\alpha)\alpha^{n-1}-F_{n}^{(k)})+(F_{n_{1}}^{(k)}-2^{m_{1}})\right|\\
&=&\left|(f_{k}(\alpha)\alpha^{n-1}-F_{n}^{(k)})+(F_{n_{1}}^{(k)}-f_{k}(\alpha)\alpha^{n_{1}-1})+(f_{k}(\alpha)\alpha^{n_{1}-1}-2^{m_{1}})\right|\\
&<&\dfrac{1}{2}+\dfrac{1}{2}+\alpha^{n_{1}-1}+2^{m_{1}}\\
&<& \alpha^{n_{1}}+2^{m_{1}}\\
&<& 2\max\{\alpha^{n_{1}}, 2^{m_{1}}\}.
\end{eqnarray*}
In the above, we have also used the fact that $ |f_{k}(\alpha)| < 1 $. Dividing through by $ 2^{m} $ we get
\begin{eqnarray}\label{fala3}
&&\left|f_{k}(\alpha)\alpha^{n-1}2^{-m} -1\right|< 2\max\left\{\dfrac{\alpha^{n_{1}}}{2^{m}}, 2^{m_{1}-m}\right\} < \max\{\alpha^{n_{1}-n+6}, 2^{m_{1}-m+1}\},
\end{eqnarray}
where for the right--most inequality in \eqref{fala3} we used \eqref{fala4} and the fact that $ \alpha^{2}> 2 $.

For the left-hand side of \eqref{fala3} above, we apply Theorem \ref{Matveev11} with the data
$$
t:=3, ~~\gamma_{1}:=f_{k}(\alpha), ~~\gamma_{2}: = \alpha,~~\gamma_{3}:=2, ~~ b_{1}:=1, ~~b_{2}:=n-1, ~~b_{3}:=-m .
$$
We begin by noticing that the three numbers $ \gamma_{1}, \gamma_{2}, \gamma_{3} $ are positive real numbers and belong to the field $ \mathbb{K}: = \mathbb{Q}(\alpha)$, so we can take $ D:= [\mathbb{K}:\mathbb{Q}]= k$. Put
$$\Lambda :=f_{k}(\alpha)\alpha^{n-1}2^{-m} -1. $$
To see why $ \Lambda \neq 0 $, note that otherwise, we would then have that $ f_{k}(\alpha) = 2^{m}\alpha^{-(n-1)} $ and so $ f_{k}(\alpha) $ would be an algebraic integer, which contradicts Lemma \ref{fala5} (i).

Since $ h(\gamma_{2})= (\log \alpha)/k <(\log 2)/k $ and $ h(\gamma_{3})= \log 2$, it follows that we can take $ A_{2}:= \log 2$ and $ A_{3}:= k\log 2 $. Further, in view of Lemma \ref{fala5} (ii), we have that $ h(\gamma_{1})<3\log k$, so we can take $ A_{1}:=3k\log k $. Finally, since $ \max\{1, n-1, m\} = n-1$, we take $ B:=n $.

Then, the left--hand side of \eqref{fala3} is bounded below, by Theorem \ref{Matveev11}, as
$$\log |\Lambda| >-1.4\times 30^6 \times 3^{4.5} \times k^4 (1+\log k)(1+\log n)(3\log k)(\log 2)(\log 2).$$
Comparing with \eqref{fala3}, we get
$$\min\{(n-n_{1}-6)\log\alpha , (m-m_{1}-1)\log 2\} < 4.2\times 10^{11} k^4 \log^{2}k(1+\log n),$$
which gives
$$\min\{(n-n_{1})\log\alpha , (m-m_{1})\log 2\} < 4.25\times 10^{11} k^4 \log^{2}k(1+\log n).$$
Now the argument is split into two cases.\\

\textbf{Case 1.} $\min \lbrace (n-n_1) \log \alpha , (m-m_1) \log 2 \rbrace  = (n - n_{1}) \log \alpha$.

\medskip

In this case, we rewrite \eqref{fala1} as
\begin{eqnarray*}
\left| f_{k}(\alpha)\alpha^{n-1} - f_{k}(\alpha)\alpha^{n_{1}-1} - 2^{m}\right| &=& \left|(f_{k}(\alpha)\alpha^{n-1}- F_{n}^{(k)}) + (F_{n_{1}}^{(k)} - f_{k}(\alpha)\alpha^{n_{1}-1}) - 2^{m_{1}}\right|\\
&<&\dfrac{1}{2}+\dfrac{1}{2}+2^{m_{1}} \leq 2^{m_{1}+1}.
\end{eqnarray*}
Dividing through by $ 2^{m} $ gives
\begin{eqnarray}\label{fala6}
\left| f_{k}(\alpha)(\alpha^{n-n_{1}}-1)\alpha^{n_{1}-1}2^{-m} - 1\right|&<&2^{m_{1}-m+1}.
\end{eqnarray}
Now we put
$$
\Lambda_{1} := f_{k}(\alpha)(\alpha^{n-n_{1}}-1)\alpha^{n_{1}-1}2^{-m} - 1.
$$
We apply again Theorem \ref{Matveev11} with the following data
$$
t:=3,~\gamma_{1} :=f_{k}(\alpha)(\alpha^{n-n_{1}}-1), ~ \gamma_{2}:=\alpha, ~\gamma_{3}:=2,  ~~ b_{1}:=1, ~b_{2}:=n_{1}-1, ~b_{3}:=-m.
$$
As before, we begin by noticing that the three numbers $ \gamma_{1}, \gamma_{2}, \gamma_{3} $ belong to the field $ \mathbb{K} := \mathbb{Q}(\alpha) $, so we can take $ D:= [\mathbb{K}: \mathbb{Q}] = k$. To see why $ \Lambda_{1} \neq 0$, note that otherwise, we would get the relation $ f_{k}(\alpha)(\alpha^{n-n_{1}}-1) = 2^{m}\alpha^{1-n_{1}} $. Conjugating this last equation with any automorphism $ \sigma$ of the Galois group of $ \Psi_{k}(x) $ over $ \mathbb{Q} $ such that $ \sigma(\alpha) = \alpha^{(i)} $ for some $ i\geq 2 $, and then taking absolute values, we arrive at the equality $ |f_{k}(\alpha^{(i)})((\alpha^{(i)})^{n-n_1}-1)| = |2^{m}(\alpha^{(i)})^{1-n_1}| $. But this cannot hold because, $ |f_{k}(\alpha^{(i)})||(\alpha^{(i)})^{n-n_1}-1|<2 $ since $ |f_{k}(\alpha^{(i)})|<1 $ by Lemma \ref{fala5} (i), and $ |(\alpha^{(i)})^{n-n_1}|<1 $, since $ n>n_1$, while $ |2^{m}(\alpha^{(i)})^{1-n_1}|\geq 2$.

Since
$$
h(\gamma_{1})\leq h(f_{k}(\alpha)) +h(\alpha^{n-n_{1}}-1) 
< 3\log k +(n-n_{1})\dfrac{\log\alpha}{k}+\log 2,
$$
it follows that
$$
kh(\gamma_{1}) < 6k\log k + (n - n_1)\log\alpha < 6k\log k + 2.95 \times 10^{11} k^4 \log^{2}k(1+\log n).
$$
So, we can take $ A_{1}:= 3\times 10^{11} k^4 \log^{2}k(1+\log n) $. Further, as before, we take $ A_{2} :=\log 2 $ and $ A_{3}: = k\log2 $. Finally, by recalling that $ m<n $, we can take $ B:=n $.

We then get that
$$\log|\Lambda_{1}|>-1.4\times 30^6 \times 3^{4.5}\times k^{3}(1+\log k)(1+\log n)(3\times 10^{11} k^4 \log^{2}k(1+\log n))(\log 2)^2,$$
which yields
$$ \log |\Lambda_{1}|>-4.13 \times 10^{22} k^7\log^3 k(1+\log n)^2.$$
Comparing this with \eqref{fala6}, we get that
$$(m-m_{1})\log 2 < 4.2\times 10^{22} k^7\log^3 k(1+\log n)^{2}.$$

\textbf{Case 2.} $\min \lbrace (n-n_1) \log \alpha , (m-m_1) \log 2 \rbrace  = (m - m_{1} ) \log 2$.

\medskip

In this case, we write \eqref{fala1} as
\begin{eqnarray*}
\left|f_{k}(\alpha)\alpha^{n-1} - 2^{m} +2^{m_{1}}\right| &=& \left|(f_{k}(\alpha)\alpha^{n-1} -F_{n}^{(k)}) + (F_{n_{1}}^{(k)} - f_{k}(\alpha)\alpha^{n_{1}-1})+ f_{k}(\alpha)\alpha^{n_{1}-1} \right| \\
&<&\dfrac{1}{2}+\dfrac{1}{2}+\alpha^{n_{1}-1} ~~<~~\alpha^{n_{1}},
\end{eqnarray*}
so that
\begin{eqnarray}\label{fala7}
&&\left|f_{k}(\alpha)(2^{m-m_{1}}-1)^{-1}\alpha^{n-1}2^{-m_{1}} - 1\right|<\dfrac{\alpha^{n_{1}}}{2^{m}-2^{m_{1}}}\leq \dfrac{2\alpha^{n_{1}}}{2^{m}}<\alpha^{n_{1}-n+6}.
\end{eqnarray}
The above inequality \eqref{fala7} suggests once again studying a lower bound for the absolute value of
$$
\Lambda_{2} := f_{k}(\alpha)(2^{m-m_{1}}-1)^{-1}\alpha^{n-1}2^{-m_{1}} - 1.
$$
We again apply Matveev's theorem with the following data
$$
t: =3,~ \gamma_{1}: =f_{k}(\alpha)(2^{m-m_{1}}-1)^{-1},~ \gamma_{2}: = \alpha, ~\gamma_{3}: = 2, ~~ b_{1}:=1,~b_{2}:=n-1, b_{3}:=-m_{1}.
$$
We can again take $ B:=n $ and  $ \mathbb{K} := \mathbb{Q}(\alpha) $, so that $ D:=k $. We also note that, if $ \Lambda_{2} =0 $, then $ f_{k}(\alpha) = \alpha^{-(n-n_{1})} 2^{m_{1}} (2^{m-m_{1}}-1) $ implying that $ f_{k}(\alpha) $ is an algebraic integer, which is not the case. Thus, $ \Lambda_{2} \neq 0 $.

Now, we note that
$$
h(\gamma_{1})\leq  h(f_{k}(\alpha))+h(2^{m-m_{1}}-1)
<3\log k +(m-m_{1}+k)\dfrac{\log 2}{k}.
$$
Thus, $ kh(\gamma_{1})< 4k\log k + (m-m_{1})\log 2  < 3 \times 10^{11} k^4\log^2k(1+\log n)$, and so we can take $ A_{1} := 3 \times 10^{11} k^4\log^2k(1+\log n) $. As before, we take $ A_{2}: = \log 2 $ and $ A_{3} := k\log  2 $.
It then follows from Matveev's theorem, after some calculations, that
$$
\log |\Lambda_{2}| > -4.13\times 10^{22}k^7\log^3 k(1+\log n)^2.
$$
From this and \eqref{fala7}, we obtain that
$$(n-n_{1})\log\alpha < 4.2 \times 10^{22}k^7\log^3 k(1+\log n)^2.$$
Thus in both Case $ 1 $ and Case $ 2 $, we have
\begin{eqnarray}\label{fala8}
\min\{(n-n_{1})\log\alpha , (m-m_{1})\log 2\} & < & 4.3\times 10^{11} k^4 \log^{2}k(1+\log n),\\
\max\{(n-n_{1})\log\alpha , (m-m_{1})\log 2\} & < & 4.2\times 10^{22}k^7\log^3 k(1+\log n)^2.\nonumber
\end{eqnarray}
We now finally rewrite equation \eqref{fala1} as
$$
\left|f_{k}(\alpha)\alpha^{n-1} -f_{k}(\alpha)\alpha^{n_{1}-1}-2^{m}+2^{m_{1}}\right| = \left|(f_{k}(\alpha)\alpha^{n-1} - F_{n}^{(k)})+(F_{n_{1}}^{(k)} - f_{k}(\alpha)\alpha^{n_{1}-1})\right| < 1.
$$
We divide through both sides by $ 2^{m}-2^{m_{1}} $ getting
\begin{eqnarray}\label{fala9}
&& \left|\dfrac{f_{k}(\alpha)(\alpha^{n-n_{1}}-1)}{2^{m-m_{1}}-1}\alpha^{n_{1}-1}2^{-m_{1}} - 1\right|<\dfrac{1}{2^{m}-2^{m_{1}}} \leq \dfrac{2}{2^{m}} <2^{5 - 0.8n},
\end{eqnarray}
since $n < 1.2m+4$. To find a lower--bound on the left--hand side of \eqref{fala9} above, we again apply Theorem \ref{Matveev11} with the data
$$
t:=3,~ \gamma_{1}: =\dfrac{f_{k}(\alpha)(\alpha^{n-n_{1}}-1)}{2^{m-m_{1}}-1},~\gamma_{2} := \alpha, ~\gamma_{3} := 2, ~ b_{1}:=1,~b_{2}:=n_{1}-1, b_{3}:=-m_{1}.
$$
We also take $ B:=n $ and we take  $ \mathbb{K} := \mathbb{Q}(\alpha) $ with $ D := k $. From the properties of the logarithmic height function, we have that
\begin{eqnarray*}
kh(\gamma_{1})&\leq& k\left(h(f_{k}(\alpha))+h(\alpha^{n-n_1}-1)+h(2^{m-m_{1}}-1)\right)\\
&<&3k\log k +(n-n_{1})\log\alpha +k(m-m_{1})\log2 + 2k\log2\\
&<&5.3\times 10^{22}k^8\log^3 k(1+\log n)^2,
\end{eqnarray*}
where in the above chain of inequalities we used the bounds \eqref{fala8}. So we can take $ A_{1} :=5.3\times 10^{22}k^8\log^3 k(1+\log n)^2  $, and certainly as before we take $ A_{2} := \log 2 $ and $ A_{3}: = k\log  2 $. We need to show that if we put
$$
\Lambda_{3}:=\dfrac{f_{k}(\alpha)(\alpha^{n-n_{1}}-1)}{2^{m-m_{1}}-1}\alpha^{n_{1}-1}2^{-m_{1}} - 1,
$$
then $ \Lambda_{3} \neq 0 $. To see why $ \Lambda_{3} \neq 0$, note that otherwise, we would get the relation 
$$ 
f_{k}(\alpha)(\alpha^{n-n_{1}}-1) = 2^{m_{1}}\alpha^{1-n_{1}}(2^{m-m_{1}}-1).
$$ 
Again, as for the case of $ \Lambda_{1} $, conjugating the above relation with an automorphism $ \sigma $ of the Galois group of $ \Psi_{k}(x) $ over $ \mathbb{Q} $ such that $ \sigma(\alpha) = \alpha^{(i)} $ for some $ i\geq 2 $, and then taking absolute values, we get that $ |f_{k}(\alpha^{(i)})((\alpha^{(i)})^{n-n_1}-1)| = |2^{m_1}(\alpha^{(i)})^{1-n_1}(2^{m-m_{1}}-1)| $. This cannot hold true because in the left--hand side we have $ |f_{k}(\alpha^{(i)})||(\alpha^{(i)})^{n-n_1}-1|<2 $, while in the right--hand side we have $ |2^{m_{1}}||(\alpha^{(i)})^{1-n_1}||2^{m-m_1}-1|\geq 2 $. Thus,
$ \Lambda_{3} \neq 0 $. Then Theorem \ref{Matveev11} gives
$$\log |\Lambda_{3}|>-1.4\times 30^{6}\times 3^{4.5}k^{11}(1+\log k)(1+\log n)\left(5.3\times 10^{22}\log^3 k(1+\log n)^2\right)(\log 2)^{2},$$
which together with \eqref{fala9} gives
$$(0.8n - 5)\log 2 < 7.3\times 10^{33} k^{11}\log^{4}k(1+\log n)^{3}.$$
The above inequality leads to
\begin{eqnarray*}
n < 5.1\times 10^{34} k^{11}\log^{4}k\log^{3}n,
\end{eqnarray*}
which can be equivalently written as
\begin{eqnarray}\label{fala10}
\dfrac{n}{(\log n)^{3}} & < & 5.1\times 10^{34} k^{11}\log^{4}k.
\end{eqnarray}
If $ A\geq 10^{30} $, the inequality
$$
\dfrac{x}{(\log x)^{3}} < A ~~\text{   yields  } ~~ x < 16A\log^{3}A.
$$
Thus, taking $ A: = 5.1\times 10^{34} k^{11}\log^{4}k $, inequality \eqref{fala10} yields
\begin{eqnarray}
n & < & 2.8\times 10^{41}k^{11}\log^{7}k.
\end{eqnarray}
We then record what we have proved so far as a lemma.
\begin{lemma}\label{lemma12}
If $ (n,m,n_{1},m_{1}, k) $ is a solution in positive integers to equation \eqref{Problem}  with $ (n,m)\neq (n_{1},m_{1}) $, $ n>n_{1}\geq 2 $, $ m>m_{1}\geq 0 $ and $ k\geq 4 $, we then have that $ n < 2.8\times 10^{41}k^{11}\log^{7}k$.
\end{lemma}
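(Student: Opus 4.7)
The plan is to apply Matveev's theorem (Theorem \ref{Matveev11}) three successive times to carefully constructed linear forms in logarithms, each time exploiting the Binet-like approximation $|F_n^{(k)} - f_k(\alpha)\alpha^{n-1}| < 1/2$ from \eqref{approxgap} together with the master equation \eqref{fala1}. The standing hypothesis will be $n \ge k+2$ (otherwise we are in the family (i) case with $c=0$) and the technical bound $n > 1600$; the elementary inequalities $\alpha^{n-4} \le 2^m$ and $2^{m-1} \le \alpha^{n-1}$ derived from \eqref{fala1} will translate throughout as $m < n < 1.2m + 4$.

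The first pass bounds $|f_k(\alpha)\alpha^{n-1}2^{-m} - 1|$ above by a small multiple of $\max\{\alpha^{n_1-n+6}, 2^{m_1-m+1}\}$, because the equation \eqref{fala1} after dividing by $2^m$ leaves only lower-order terms. Applying Matveev with $\gamma_1 = f_k(\alpha), \gamma_2 = \alpha, \gamma_3 = 2$, with $D = k$, and using $h(f_k(\alpha)) < 3\log k$ from Lemma \ref{fala5}(ii), produces an upper bound of order $k^4 \log^2 k\,(1+\log n)$ on $\min\{(n-n_1)\log\alpha, (m-m_1)\log 2\}$. Non-vanishing of this form follows because otherwise $f_k(\alpha)$ would be an algebraic integer, contradicting Lemma \ref{fala5}(i).

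The second pass splits into two cases according to which quantity realizes the minimum. If $n-n_1$ is smaller, I regroup \eqref{fala1} to isolate $f_k(\alpha)(\alpha^{n-n_1}-1)\alpha^{n_1-1}2^{-m} - 1$; if $m-m_1$ is smaller, I instead isolate $f_k(\alpha)(2^{m-m_1}-1)^{-1}\alpha^{n-1}2^{-m_1} - 1$. In each subcase, the new $\gamma_1$ has height bounded via the output of the first pass, yielding $A_1$ of order $k^4 \log^2 k\,(1+\log n)$. The critical subtlety is verifying non-vanishing: conjugating a hypothetical vanishing relation by an automorphism of the splitting field of $\Psi_k$ sending $\alpha \mapsto \alpha^{(i)}$ with $|\alpha^{(i)}| < 1$, one compares absolute values and obtains a contradiction from $|f_k(\alpha^{(i)})| < 1$ (Lemma \ref{fala5}(i)) and $|\alpha^{(i)}|^{n-n_1} < 1$. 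A second Matveev application then bounds the maximum of $(n-n_1)\log\alpha$ and $(m-m_1)\log 2$ by something of order $k^7 \log^3 k\,(1+\log n)^2$.

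Finally, I absorb both differences into a single linear form $\Lambda_3 = \frac{f_k(\alpha)(\alpha^{n-n_1}-1)}{2^{m-m_1}-1}\alpha^{n_1-1}2^{-m_1} - 1$; the identity \eqref{fala1} together with \eqref{approxgap} shows $|\Lambda_3| < 2^{5-0.8n}$ using $n < 1.2m+4$. The height of the composite $\gamma_1$ now absorbs both second-pass bounds, so $kh(\gamma_1)$ is of order $k^8 \log^3 k\,(1+\log n)^2$, and non-vanishing is again handled by Galois conjugation as in the $\Lambda_1$ case. A third Matveev invocation, compared with $|\Lambda_3| < 2^{5-0.8n}$, delivers an inequality of the shape $n/(\log n)^3 < C\, k^{11}\log^4 k$ with an explicit absolute constant $C$. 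Closing the loop with the elementary fact that $x/(\log x)^3 < A$ forces $x < 16A \log^3 A$ for $A \ge 10^{30}$, I arrive at $n < 2.8 \times 10^{41} k^{11} \log^7 k$.

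The main obstacle is purely one of bookkeeping: each Matveev application contributes a universal factor on the order of $10^{11}\,k \log k$, and the heights of the successive $\gamma_1$'s inflate by the previous bounds, so careful tracking is needed to arrive at the stated constants $10^{22}$ and $10^{34}$ at the intermediate steps. The non-vanishing arguments are conceptually the interesting part but are structurally identical across the three forms, relying uniformly on Lemma \ref{fala5}(i) and a Galois conjugation sending $\alpha$ to any subdominant root.
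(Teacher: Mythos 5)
Your proposal follows exactly the paper's three-stage argument: the same three linear forms $\Lambda$, $\Lambda_1$/$\Lambda_2$, $\Lambda_3$, the same height estimates via Lemma \ref{fala5}, the same non-vanishing arguments (algebraic-integer obstruction and Galois conjugation to a subdominant root), and the same final step $x/(\log x)^3 < A \Rightarrow x < 16A\log^3 A$. This is essentially identical to the paper's proof, down to the intermediate constants.
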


\section{Reduction of the bounds on $ n $}

\subsection{The cutoff $k$}

We have from the above that Baker's method gives
$$
n < 2.8\times 10^{41}k^{11}\log^{7}k.
$$
Imposing that the above amount is at most $2^{(k-5)/3}$, which would imply inequality \eqref{hip}, we get
$$
2.8^{3} \times 10^{123} k^{33} (\log k)^{21} < 2^k,
$$
leading to $k>790$.


We now reduce the bounds and to do so we make use of Lemma \ref{Dujjella} several times.

\subsection{The Case of small $ k $}

We next treat the cases when $ k\in[4,790] $. We note that for these values of the parameter $ k $, Lemma \ref{lemma12} gives us absolute upper bounds for $ n $. However, these upper bounds are so large that we wish to reduce them to a range where the solutions can be identified by using a computer. To do this, we return to \eqref{fala3} and put
\begin{eqnarray}
\Gamma &:=&(n-1)\log\alpha - m\log 2 + \log\left(f_{k}(\alpha)\right).
\end{eqnarray}
For technical reasons we assume that $ \min\{n-n_{1}, m-m_{1} \} \geq 20 $. In the case that this condition fails, we consider one of the following inequalities instead:
\begin{itemize}
\item[(i)] if $ n-n_{1} < 20 $ but $ m-m_{1} \geq 20 $, we consider \eqref{fala6};
\item[(ii)]if $ n-n_{1} \geq 20 $ but $ m-m_{1} < 20 $, we consider \eqref{fala7};
\item[(iii)]if $ n-n_{1} < 20 $ and $ m-m_{1} < 20 $, we consider \eqref{fala9}.
\end{itemize}
Let us start by considering \eqref{fala3}. Note that $ \Gamma\neq 0 $; thus we distinguish the following cases. If $ \Gamma > 0 $, then $ e^{\Gamma} - 1>0$, so from \eqref{fala3} we obtain
$$0<\Gamma<e^{\Gamma} -1 <\max\{\alpha^{n_{1}-n+6}, 2^{m_{1}-m+1}\}. $$
Suppose now that $ \Gamma<0 $. Since $ \Lambda = |e^{\Gamma}-1 | <1/2$, we get that $ e^{|\Gamma|}<2 $. Thus,
$$0<|\Gamma|\leq e^{|\Gamma|} -1 =e^{|\Gamma|}|e^{\Gamma}-1| <2\max\{\alpha^{n_{1}-n+6}, 2^{m_{1}-m+1}\}. $$
In any case, we have that the inequality
\begin{eqnarray}\label{arg1}
0<|\Gamma| < 2\max\{\alpha^{n_{1}-n+6}, 2^{m_{1}-m+1}\}
\end{eqnarray}
always holds. Replacing $ \Gamma $ in the above inequality by its formula and dividing through by $ \log 2 $, we conclude that
$$0 < \left|(n-1)\left(\dfrac{\log\alpha}{\log 2}\right) - m + \dfrac{\log (f_{k}(\alpha))}{\log 2}\right|< \max\{200\cdot \alpha^{-(n-n_{1})}, ~8\cdot 2^{-(m-m_{1})}\}.$$
We apply Lemma \ref{Dujjella} with the data
$$k\in[4, 790], \qquad \tau_k: = \dfrac{\log\alpha}{\log 2}, \qquad \mu_k := \dfrac{\log (f_{k}(\alpha))}{\log 2}, \qquad (A_k, B_k) := ( 200, \alpha) \quad \text{or} \quad (8, 2).$$

We also put $ M_{k}: = \lfloor 2.8\times 10^{41}k^{11}\log^{7}k \rfloor $, which is upper bound on $ n $ by Lemma \ref{lemma12}. From the fact that $ \alpha $ is a unit in $ \mathcal{O}_{\mathbb{K}} $, the ring of integers of $ \mathbb{K} $, ensures that $ \tau_k $ is an irrational number. Furthermore, $ \tau_k $ is transcendantal by Gelfond--Schneider Theorem. A computer search in \textit{Mathematica} showed that the maximum value of $ \lfloor \log(200q/\varepsilon)/\log \alpha\rfloor $ is $ < 1571 $ and the maximum value of $ \lfloor \log(8q/\varepsilon)/\log 2\rfloor $ is $ < 1566$. Therefore, either
\begin{eqnarray*}
n-n_{1} < \dfrac{\log(200q/\varepsilon)}{\log\alpha} < 1571, ~~\text {or  } m-m_{1} < \dfrac{\log(8q/\varepsilon)}{\log2}< 1566.
\end{eqnarray*}
Thus, we have that either $ n-n_{1}\leq 1571 $, or $ m-m_{1}\leq 1566 $.

First, let us assume that $ n-n_{1}\leq 1571$. In this case we consider the inequality \eqref{fala6} and assume that $ m-m_{1}\geq 20 $. We put
\begin{eqnarray*}
\Gamma_{1}&=& (n_{1}-1)\log\alpha- m\log2 + \log(f_{k}(\alpha)(\alpha^{n-n_{1}}-1)).
\end{eqnarray*}
By the same arguments used for proving \eqref{arg1}, from \eqref{fala6} we get
\begin{eqnarray*}
0<|\Gamma_{1}|< \dfrac{4}{2^{m-m_{1}}},
\end{eqnarray*}
and so
\begin{eqnarray}\label{arg2}
~~~0<\left|(n_{1}-1)\left(\dfrac{\log\alpha}{\log2}\right) - m + \cfrac{\log(f_{k}(\alpha)(\alpha^{n-n_{1}}-1))}{\log2}\right|<8\cdot 2^{-(m-m_{1})}.
\end{eqnarray}
As before, we keep the same $ \tau_k $, $ M_k $, $ (A_k, B_k) := (8, 2) $ and put
\begin{eqnarray*}
\mu_{k, l}= \dfrac{\log(f_{k}(\alpha)(\alpha^{l}-1))}{\log 2}, \qquad k\in [4, 790] \quad {\rm and} \quad l\in[1, 1566].
\end{eqnarray*}
We now apply Lemma \ref{Dujjella} to inequality \eqref{arg2} for the values of $k\in[4,790]$ and $ l\in [1, 1571]$. A computer search with \textit{Mathematica} revealed that the maximum value of $ \lfloor \log(Aq/\varepsilon)/\log B \rfloor $ over the values of $k\in[4,790]$ and $ l\in [1, 1571] $ is $ < 1570$. Hence, $ m-m_{1} \leq 1570 $.

Now let us assume that $ m-m_{1}\leq 1566 $. In this case, we consider the inequality \eqref{fala7} and assume that $ n-n_{1}\geq 20 $. We put
\begin{eqnarray*}
\Gamma_{2}&=& (n-1)\log\alpha -m_{1}\log2 +\log (f_{k}(\alpha)(2^{m-m_{1}}-1)).
\end{eqnarray*}
Then, by the same arguments as before, we get
\begin{eqnarray*}
0<|\Gamma_{2}|<\dfrac{2\alpha^{6}}{\alpha^{n-n_{1}}}.
\end{eqnarray*}
Replacing $ \Gamma_{2} $ in the above inequality by its formula and dividing through by $ \log 2 $, we finally get that
\begin{eqnarray*}
~~~~~~~~~~0<\left| (n-1)\left(\dfrac{\log\alpha}{\log 2}\right) - m_{1}+ \dfrac{\log(f_{k}(\alpha)(2^{m-m_{1}}-1))}{\log2}\right| < 114\cdot \alpha^{-(n-n_{1})}.
\end{eqnarray*}
We apply Lemma \ref{Dujjella} with the same $ \tau_k $, $ M_k $, $ (A_k, B_k) := (114, \alpha) $ and put
\begin{eqnarray*}
\mu_{k, l}= \dfrac{\log(f_{k}(\alpha)(2^{l}-1))}{\log 2},  \qquad k\in [4, 790] \quad {\rm and} \quad l\in[1, 1566].
\end{eqnarray*}
As before, a computer search with \textit{Mathematica} revealed that the maximum value of
$$
\lfloor \log(Aq/\varepsilon)/\log B \rfloor, \qquad {\rm for } \quad k\in[4,790] \quad {\rm and} \quad l \in [1, 1566]
$$
is $ < 1574 $. Hence, $ n-n_{1}\leq 1574 $.

To conclude the above computations, we first got that either $ n-n_{1}\leq 1571 $ or $ m-m_{1}\leq 1566 $.
If $ n-n_{1}\leq 1571 $, then $ m-m_{1}\leq 1570 $, and if $ m-m_{1}\leq 1566 $, then $ n-n_{1}\leq 1574 $.
Thus, in conclusion, we always have that
\begin{eqnarray*}
n-n_{1}\leq 1574 \qquad {\rm and} \qquad m-m_{1}\leq 1570.
\end{eqnarray*}
Finally, we go to \eqref{fala9} and put
\begin{eqnarray*}
\Gamma_{3}&=& (n_{1}-1)\log\alpha - m_{1}\log2 + \log\left(\dfrac{f_{k}(\alpha)(\alpha^{n-n_{1}}-1)}{2^{m-m_{1}}-1}\right).
\end{eqnarray*}
Since $ n>1600 $, from \eqref{fala9} we conclude that
\begin{eqnarray*}
0<|\Gamma_{3}|< \dfrac{2^{6}}{2^{0.8n}}.
\end{eqnarray*}
Hence,
\begin{eqnarray*}
0<\left|(n_{1}-1)\left(\dfrac{\log\alpha}{\log 2}\right) -m_{1}+\dfrac{\log(f_{k}(\alpha)(\alpha^{l}-1)/(2^{j}-1))}{\log 2}\right|<(2^6/\log 2)\cdot 2^{-n},
\end{eqnarray*}
where $ (l, j): = (n-n_{1},~m-m_{1}) $. We apply Lemma \ref{Dujjella} with the same $ \tau_k $, $ M_k $, $ (A_k, B_k) := (2^6/\log 2, 2)$ and
\begin{eqnarray*}
\mu_{k, l,j} = \dfrac{\log(f_{k}(\alpha)(\alpha^{l}-1)/(2^{j}-1))}{\log 2} \quad \text{for} \quad k\in [4, 790],  \quad l\in[1, 1574]\quad  {\rm and} \quad  j\in [1, 1570].
\end{eqnarray*}
With the help of \textit{Mathematica} we find that the maximum value of
$$
\lfloor \log (114q/\varepsilon)/\log 2 \rfloor, \quad {\rm for} \quad k\in [4, 790],  \quad l\in[1, 1574]\quad  {\rm and} \quad  j\in [1, 1570]
$$
is  $ < 1574$. Thus, $ n< 1574 $, which contradicts the assumption that $ n > 1600 $ in Section 5.

We finish the resolution of the Diophantine equation \eqref{Problem}, for this case, with the following procedure.
Consider the following equivalent equation to \eqref{Problem}
$$
F_n^{(k)} - F_{n_1}^{(k)} = 2^m - 2^{m_1}.
$$
For $k\in [4, 790]$ and $n\in [k+2, 1600]$, let the set
$$
F_{n,k} := \left\{F_n^{(k)} - F_{n_1}^{(k)} ~~({\rm mod} ~ 10^{20}) : n_1 \in [2, n-1]\right\},
$$
and
$$
D_{n, k} := \left\{ 2^m-2^{m_1} ~~({\rm mod} ~ 10^{20}) : m \in [\lfloor c(n-4)\rceil, \lfloor c(n-1)+1 \rceil], ~~m_1\in[0, m-1] \right\}
$$
with $c = \log\alpha / \log2$. Note that we have used \eqref{fala2} to define the range of $m$ in $D_{n, k}$. As in all computations of this paper, with the help of Mathematica, we looked for all $(n, k)$ the intersections
$F_{n, k} \cap D_{n, k}$. After an extensive search, we obtain that $F_{n, k} \cap D_{n, k}$ contains only the solutions corresponding to the families (i)--(iv) in the statement of Theorem \ref{Main} for the current range of the variables.

This completes the proof in the case of small $ k $.

\subsection{The Case of large $ k $.}

In this case we assume that $ k> 790 $, we have already shown that the Diophantine equation \eqref{Problem} has only the solutions listed in Theorem \ref{Main}.

\section*{acknowledgements}
We thank the referee for pointing out some errors in a previous version of this manuscript. M.~D. was supported by the FWF grant F5510-N26, which is part of the special research program (SFB), ``Quasi Monte Carlo Metods: Theory and Applications''. M.~D. would also like to thank his supervisor Prof. Dr. Robert Tichy for the encouragement and the useful comments and remarks that greatly improved in the quality of this paper.
C.~A.~G. was supported in part by Project 71079 (Universidad del Valle). F.~L. was supported by grant CPRR160325161141 and an A-rated scientist award both from the NRF of South Africa and by grant no. 17-02804S of the Czech Granting Agency.


\begin{thebibliography}{}
%
%



\bibitem{BD69} A. Baker and H. Davenport, {\it The equations $3x^2-2 =y^2$ and $8x^2-7=z^2$}, \newblock{\em Quart. J. Math. Oxford Ser}. {\bf 20}, No. 2,  pp. 129--137 (1969).

\bibitem{bawu07}
A.~Baker and G.~W{\"u}stholz,
\newblock {\em Logarithmic forms and {D}iophantine geometry}, volume~9 of {\em
  New Mathematical Monographs}.
\newblock Cambridge University Press, Cambridge (2007).

\bibitem{BBL17} J. J. Bravo and F. Luca, {\it Powers of two in generalized Fibonacci sequences}, Revista Colombiana de Matem\'aticas  {\bf 46}, No.1,  pp. 67--79 (2012).

\bibitem{BravoLuca13} J. J. Bravo and F. Luca, {\it Coincidences in generalized Fibonacci sequences}, J. Number Theory {\bf 133}, No. 6,  pp. 2121--2137 (2013).

\bibitem{Mihailescu}
P.~Mih\u ailescu {\it Primary cyclotomic units and a proof of Catalans conjecture} \newblock{ Journal f\"ur die reine und angewandte Mathematik (Crelles Journal)}, 2004.{\bf572} (2006): 167-195. Retrieved 24 Jul. 2017, from doi:10.1515/crll.2004.048

\bibitem{Luca16} J.~J.~Bravo, F.~Luca, and K.~Yaz{\'a}n. {\it On a problem of Pillai with Tribonacci numbers and powers of 2}. 
\newblock {\em Bull. Korean Math. Soc.} \textbf{54}, No. 3, pp. 1069--1080 (2017). doi:10.4134/BKMS.b160486.

\bibitem{cpz16} K.~C.~Chim, I.~Pink and V.~Ziegler. {\it On a variant of Pillai's problem}. \newblock {\em  Int. J. Number Theory}, \textbf{13}, No.7, pp. 1711--1727 (2017), doi:10.1142/S1793042117500981.

\bibitem{Howard:2011} C. Cooper and F. T. Howard, {\it Some identities for $r-$Fibonacci numbers}. \newblock{\em Fibonacci Quart.} \textbf{49}, No.3 , pp. 231--243 (2011).

\bibitem{Luca15} M.~Ddamulira, F.~Luca and M.~Rakotomalala. {\it On a problem of Pillai with Fibonacci numbers and powers of 2}. \newblock {\em Proc. Math. Sci.}, \textbf{127}, No.3, pp. 411--421 (2017), doi:10.1007/s12044-017-0338-3.

\bibitem{Dresden2014} G. P. Dresden and Zhaohui Du, {\it A simplified Binet formula for $k-$generalized Fibonacci numbers}, J. Integer Sequences {\bf 17} (2014), Article 14.4.7.

\bibitem{dujella98} A. Dujella and A. Peth\H o, {\it A generalization of a theorem of Baker and Davenport}, Quart. J. Math. Oxford Ser. \textbf{49} , No.3, pp. 291--306 (1998).

\bibitem{Gomez} C. A. G\'omez Ru\'{\i}z and F. Luca, {\it On the largest prime factor of the ratio of two generalized Fibonacci numbers}, \newblock{\em J. Number Theory} {\bf 152} , pp. 182--203 (2015).

\bibitem{Herschfeld:1935} A. Herschfeld. {\it The equation $2^x - 3^y = d$}. \newblock {\em Bull. Amer. Math. Soc. }, {\bf 41}, pp. 631 (1935).

\bibitem{Herschfeld:1936} A. Herschfeld. {\it The equation $2^x - 3^y = d$}. \newblock {\em Bull. Amer. Math. Soc. }, {\bf 42}, pp. 231--234 (1936).

\bibitem{MatveevII} E. M. Matveev, {\it An explicit lower bound for a homogeneous rational linear form in the logarithms of algebraic numbers}, II, Izv. Ross. Akad. Nauk Ser. Mat. \textbf{64}, No.6 , pp. 125--180 (2000); translation in Izv. Math. \textbf{64} ,  No. 6, pp. 1217--1269 (2000).

\bibitem{Pillai:1936} S.~S. Pillai. {\it On $a^x - b^y = c$}. \newblock {\em J. Indian Math. Soc. (N.S.)}, {\bf2}: pp. 119--122 (1936).

\bibitem{Pillai:1937} S.~S. Pillai. {\it A correction to the paper On $a^x - b^y = c$}. \newblock {\em J. Indian Math. Soc. (N.S.)}, {\bf 2}, pp. 215, (1937).

\end{thebibliography}

\def\cprime{$'$}


 \end{document}